\title{Симметрии двумерной цепной дроби}
     \date{}
\title{Симметрии двумерной цепной дроби.
       \thanks{Работа выполнена при поддержке РФФИ (грант \textnumero 18-01-00886), а также гранта Фонда развития теоретической физики и математики «БАЗИС»}}
\author{О.\,Н.\,Герман, И.\,А.\,Тлюстангелов}
\date{}
\theoremstyle{definition}
\newtheorem{definition}{Определение}
\newtheorem*{notation*}{Обозначение}
\theoremstyle{remark}
\newtheorem{remark}{Замечание}
\newtheorem*{remark*}{Замечание}
\theoremstyle{plain}
\newtheorem{theorem}{Теорема}
\newtheorem{lemma}{Лемма}
\newtheorem{proposition}{Предложение}
\newtheorem{corollary}{Следствие}
\newtheorem*{statement*}{Утверждение}
\newtheorem*{corollary*}{Следствие}
\newtheorem{proof_m*}{Доказательство теоремы 1}
\DeclareMathOperator{\conv}{conv}
\DeclareMathOperator{\id}{id}
\renewcommand{\vec}[1]{\mathbf{#1}}
\renewcommand{\geq}{\geqslant}
\newcommand{\e}{\varepsilon}
\newcommand{\R}{\mathbb{R}}
\newcommand{\Z}{\mathbb{Z}}
\newcommand{\Q}{\mathbb{Q}}
\newcommand{\Comp}{\mathbb{C}}
\newcommand{\cC}{\mathcal{C}}
\newcommand{\cK}{\mathcal{K}}
\newcommand{\gO}{\mathfrak{O}}
\newcommand{\gA}{\mathfrak{A}}
\newcommand{\gU}{\mathfrak{U}}
\newcommand{\Sl}{\textup{SL}}
\newcommand{\Gl}{\textup{GL}}
\newcommand{\norm}{\textup{N}}
\newcommand{\trace}{\textup{Tr}}
\newcommand{\cf}{\textup{CF}}
\newcommand{\dir}{\textup{Dir}}
\begin{document}

\maketitle

\begin{abstract}
  Данная работа посвящена описанию группы симметрий многомерных цепных дробей. В качестве многомерного обобщения цепных дробей мы рассматриваем полиэдры Клейна. Мы выделяем два типа симметрий: симметрии Дирихле, соответствующие умножению на единицы соответствующего расширения поля $\Q$, и так называемые палиндромические симметрии. Основным результатом работы является критерий наличия у двумерной цепной дроби палиндромических симметрий, аналогичный известному критерию симметричности периода цепной дроби квадратичной иррациональности.
\end{abstract}

\section{Введение}\label{intro_ch}

Классическая теорема Лагранжа утверждает, что цепная дробь действительного числа $\alpha$ периодична тогда и только тогда, когда $\alpha$ является квадратичной иррациональностью. При этом период, прочитанный в обратном порядке, становится периодом цепной дроби сопряжённого числа. Это следует из теоремы Галуа, которую он доказал в своей самой первой работе  \cite{galois}. А именно, он показал, что если
\[\alpha=[\overline{a_0;a_1,\ldots,a_t}]\]
и $\alpha'$ --- сопряжённая к $\alpha$ квадратичная иррациональность, то
\[-1/\alpha'=[\overline{a_t;a_{t-1},\ldots,a_0}].\]
В частности, если для такого $\alpha$ слово $(a_0,\ldots,a_t)$ симметрично, то $\norm_{\Q(\alpha)/\Q}(\alpha)=\alpha\alpha'=-1$. Возникает естественный вопрос, каков критерий того, что период цепной дроби квадратичной иррациональности симметричен. При этом понятно, что само понятие симметричности периода требует уточнения, ибо, скажем, у последовательности с периодом $(1,2)$ слово $(2,1)$ также является периодом, но ни одно из этих слов не симметрично. Для этих целей очень уместным оказывается понятие циклического палиндрома. Напомним, что конечная последовательность $(a_1,a_2,\ldots,a_{t-1},a_t)$ называется \emph{циклическим палиндромом}, если существует такой циклический сдвиг индексов $\sigma$, что $a_k=a_{\sigma(t+1-k)}$ для любого $k\in\{1,\ldots,t\}$. Соответственно, более строго сформулировать вопрос о симметричности периода можно так: \textit{каков критерий того, что период цепной дроби квадратичной иррациональности является циклическим палиндромом?}

Ответ на этот вопрос следует из результатов Лежандра \cite{legendre} и Крайтчика \cite{kraitchik}, несколько переработанных Перроном в книге \cite{perron_book}. Чтобы его сформулировать, напомним, что в теории цепных дробей два числа $\alpha$ и $\omega$ называют \emph{эквивалентными} (и пишут $\alpha\sim\omega$), если существуют такие $a,b,c,d\in\Z$, что $\alpha=(a\omega + b)/(c\omega + d)$, $ad - bc = \pm 1$. Эквивалентность равносильна тому, что «хвосты»  цепных дробей чисел $\alpha$ и $\omega$ совпадают.

Будем обозначать через $\norm(\alpha)$ и $\trace(\alpha)$ соответственно норму $\norm_{\Q(\alpha)/\Q}(\alpha)$ и след $\trace_{\Q(\alpha)/\Q}(\alpha)$ алгебраического числа $\alpha$.

\begin{proposition}\label{two_dimension}
  Период цепной дроби квадратичной иррациональности $\alpha$ является циклическим палиндромом тогда и только тогда, когда выполняется хотя бы одно из следующих условий:

  \textup{(а)}\ $\alpha\sim\omega:\ \trace(\omega)=0\ (\text{эквивалентно тому, что } \, \omega^2\in\Q)$;

  \textup{(б)}\ $\alpha\sim\omega:\ \trace(\omega)=1\ (\text{эквивалентно тому, что } \, (\omega-1/2)^2\in\Q)$;

  \textup{(в)}\ $\alpha\sim\omega:\hskip 2.25mm \norm(\omega)=1$;

  \hskip 0.18mm
  \textup{(г)}\ $\alpha\sim\omega:\hskip 2.25mm \norm(\omega)=-1$.
  \\
  Более того, условие $(\textup{б})$ эквивалентно условию $(\textup{в})$.
\end{proposition}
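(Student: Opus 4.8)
The plan is to reduce the palindromicity of the period to the single relation $\alpha\sim\alpha'$, and then to translate that relation into the arithmetic conditions (а)--(г) by classifying the integer matrices that can interchange $\alpha$ with its conjugate $\alpha'$. First I would normalise: since the cyclic-palindrome property of the period, the relations (а)--(г), and the relation $\alpha\sim\alpha'$ are all invariants of the equivalence class (equivalent numbers share the same period up to a cyclic shift, and Galois conjugation commutes with the $\Gl_2(\Z)$-action because the entries are integers), I may assume $\alpha$ is purely periodic, $\alpha=[\overline{a_0;a_1,\ldots,a_t}]$. By the theorem of Galois quoted in the introduction, $-1/\alpha'=[\overline{a_t;a_{t-1},\ldots,a_0}]$, so the reversed period is exactly the period of $-1/\alpha'$. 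Hence the period of $\alpha$ is a cyclic palindrome precisely when the period of $-1/\alpha'$ is a cyclic shift of the period of $\alpha$, i.e. precisely when $\alpha\sim-1/\alpha'$. Since $-1/\alpha'$ is obtained from $\alpha'$ by the matrix $\left(\begin{smallmatrix}0&-1\\1&0\end{smallmatrix}\right)\in\Gl_2(\Z)$, we always have $-1/\alpha'\sim\alpha'$; therefore the period of $\alpha$ is a cyclic palindrome if and only if $\alpha\sim\alpha'$.

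The easy direction is then immediate: each of (а)--(г) exhibits an explicit $g_0\in\Gl_2(\Z)$ with $g_0\omega=\omega'$ --- namely $\left(\begin{smallmatrix}-1&0\\0&1\end{smallmatrix}\right)$, $\left(\begin{smallmatrix}-1&1\\0&1\end{smallmatrix}\right)$, $\left(\begin{smallmatrix}0&1\\1&0\end{smallmatrix}\right)$, $\left(\begin{smallmatrix}0&-1\\1&0\end{smallmatrix}\right)$ respectively --- so $\omega\sim\omega'$, and combining with $\alpha\sim\omega$ (hence $\alpha'\sim\omega'$) gives $\alpha\sim\omega\sim\omega'\sim\alpha'$. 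For the converse, suppose $g\alpha=\alpha'$ with $g\in\Gl_2(\Z)$. Applying the conjugation and using that $g$ has integer entries yields $g\alpha'=(g\alpha)'=\alpha$, so $g$ interchanges the two distinct roots $\alpha,\alpha'$. A Möbius transformation interchanging two distinct points is conjugate over $\R$ to $z\mapsto\lambda/z$, whose matrix is anti-diagonal; since the trace is a conjugacy invariant, $\trace(g)=0$.

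The crux is then to classify integer matrices $g$ with $\trace(g)=0$ and $\det g=\pm1$ up to $\Gl_2(\Z)$-conjugacy. For $\det g=1$ the characteristic polynomial is $x^2+1$, so $g$ has order $4$; viewing $\Z^2$ as a module over the PID $\Z[i]$ shows there is a single class, represented by $\left(\begin{smallmatrix}0&-1\\1&0\end{smallmatrix}\right)$. For $\det g=-1$ we have $g^2=I$ with eigenvalues $\pm1$; the eigenlattices $L_\pm=\ker(g\mp I)\cap\Z^2$ satisfy $2\Z^2\subseteq L_+\oplus L_-\subseteq\Z^2$, and since the $L_\pm$ are primitive the index $[\Z^2:L_+\oplus L_-]$ equals $1$ or $2$. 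This index is a complete conjugacy invariant, giving exactly the representatives $\left(\begin{smallmatrix}-1&0\\0&1\end{smallmatrix}\right)$ (index $1$) and $\left(\begin{smallmatrix}0&1\\1&0\end{smallmatrix}\right)$ (index $2$). Conjugating $g$ to the appropriate normal form $g_0=hgh^{-1}$ and setting $\omega=h\alpha$ gives $g_0\omega=\omega'$; reading off this identity yields $\trace(\omega)=0$, $\norm(\omega)=1$, or $\norm(\omega)=-1$, that is condition (а), (в), or (г). I expect the main obstacle to be verifying that the index is a complete invariant in the $\det g=-1$ case, since the $\det g=+1$ case is settled quickly by the $\Z[i]$-module argument.

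Finally, the equivalence of (б) and (в) follows from the same classification: the matrices $\left(\begin{smallmatrix}-1&1\\0&1\end{smallmatrix}\right)$ and $\left(\begin{smallmatrix}0&1\\1&0\end{smallmatrix}\right)$ are both index-$2$ involutions, hence $\Gl_2(\Z)$-conjugate. Explicitly, $h=\left(\begin{smallmatrix}1&0\\-1&1\end{smallmatrix}\right)$ conjugates the first into the second, so the substitution $\omega\mapsto\omega/(1-\omega)$ carries a representative with $\trace(\omega)=1$ into one with $\norm(\omega)=1$ within the same equivalence class, and the inverse substitution gives the converse.
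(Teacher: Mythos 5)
Your argument is correct, but note that the paper itself does not prove Proposition \ref{two_dimension}: it only cites \cite{german_tlyustangelov_mjcnt}, where the proof is carried out by geometry of numbers, i.e.\ via Klein polygons and the reformulation of palindromicity as the existence of an operator $G\in\Gl_2(\Z)$ preserving $\cf(l_1,l_2)$ and swapping $l_1$ with $l_2$ (Proposition \ref{prop:geometry_of_palindromicity}). You replace that geometric first step by the classical combination of Galois's theorem with Serret's theorem on equivalence of continued fractions, reducing palindromicity of the period directly to the single relation $\alpha\sim\alpha'$; from there the two arguments essentially merge, since a matrix $g\in\Gl_2(\Z)$ interchanging $\alpha$ and $\alpha'$ is antidiagonal in the eigenbasis, hence has $\trace(g)=0$ and $g^2=\pm I_2$, and what remains is the conjugacy classification of such $g$ --- precisely the one-dimensional counterpart of Lemmas \ref{ord_3}, \ref{main_lem}, \ref{oper_eq} and Section \ref{sec:proof}, with your three normal forms playing the role of the operators $F_i$. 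Your classification is right, and the one point you flag but do not fully verify --- that the index $[\Z^2:L_+\oplus L_-]\in\{1,2\}$ is a complete conjugacy invariant for involutions of determinant $-1$ --- does hold and is quick: if the index equals $2$, primitivity of the generators $u,w$ of $L_{\pm}$ forces $\tfrac12(u+w)\in\Z^2$, and then $\tfrac12(u+w),\ \tfrac12(u-w)$ is a basis of $\Z^2$ in which $g$ becomes $\left(\begin{smallmatrix}0&1\\1&0\end{smallmatrix}\right)$; the explicit conjugation you give for the equivalence of (б) and (в) also checks out. What your route buys is a self-contained arithmetic proof requiring no sails; what the geometric route buys is a statement and proof scheme that generalizes to $n=3$, which is the point of the present article.
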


Подробное доказательство предложения \ref{two_dimension}, использующее методы геометрии чисел, можно найти в статье \cite{german_tlyustangelov_mjcnt}. Данная же работа посвящена двумерному обобщению предложения \ref{two_dimension}.

Текст имеет следующую структуру: в параграфе \ref{sec_2} мы напоминаем геометрическую интерпретацию цепных дробей и описываем соответствующее многомерное обобщение; параграф \ref{sec_3} посвящён формулировке основного результата; в параграфе \ref{sec_4} мы изучаем симметрии, существование которых следует из теоремы Дирихле об алгебраических единицах; в параграфе \ref{sec_5} мы изучаем палиндромические симметрии двумерных цепных дробей; в параграфе \ref{sec:proof} мы доказываем основной результат данной работы; наконец, в параграфе \ref{sec_6} мы анализируем различие между двумерным результатом и одномерным.

\section{Геометрия цепных дробей}\label{sec_2}

\subsection{Полигоны Клейна и геометрические цепные дроби}\label{cpt_1}

Цепные дроби имеют довольно изящную геометрическую интерпретацию. Подробно эта интерпретация описана, например, в работах \cite{german_tlyustangelov_mjcnt}, \cite{korkina_2dim} (см. также книгу \cite{karpenkov_book}). Там же можно найти доказательства фактов, приводимых в данном пункте.

Рассмотрим на плоскости $\R^2$ прямые $l_1$ и $l_2$, проходящие через начало координат $\vec 0$, порождённые двумя различными векторами $(1, \alpha)$ и $(1, \beta)$ соответственно. Эти прямые делят плоскость на четыре угла $C_1$, $C_2$, $C_3$, $C_4$. Для каждого $i=1,2,3,4$ выпуклая оболочка
\[
  \cK(C_i)=\conv(C_i\cap\Z^2\setminus\{\vec 0\})
\]
ненулевых точек решётки $\Z^2$, лежащих в $C_i$, называется \emph{полигоном Клейна}. Граница $\partial(\cK(C_i))$ называется \emph{парусом Клейна}. Каждый парус --- неограниченная в обе стороны ломаная с целыми вершинами. \emph{Геометрической цепной дробью} $\cf(l_1,l_2)$ называется объединение всех четырёх парусов, соответствующих данной паре прямых.

Последовательности целочисленных длин рёбер полигонов Клейна и целочисленных углов между соседними рёбрами тесно связаны с неполными частными классической цепной дроби. Например, если $\alpha$ и $\beta$ иррациональны и выполняются условия $\alpha>1$ и $-1<\beta<0$, эти величины в точности равны соответствующим неполным частным чисел $\alpha$ и $-1/\beta$ (см. рис. \ref{fig:KP_and_CF}).

\begin{figure}[h]
  \centering
  \begin{tikzpicture}[scale=1.2]
    \draw[very thin,color=gray,scale=1] (-3.8,-1.7) grid (4.8,4.56);

    \draw[color=black] plot[domain=-13/9:3.6] (\x, {11*\x/8}) node[right]{$l_1$};
    \draw[color=black] plot[domain=-4:5] (\x, {-3*\x/8}) node[right]{$l_2$};

    \fill[blue!10!,path fading=north]
        (4.8,4.56) -- (3+0.4,4+0.4*7/5) -- (3,4) -- (1,1) -- (1,0) -- (4.8,4.56) -- cycle;
    \fill[blue!10!,path fading=east]
        (4.8,4.56) -- (1,0) -- (3,-1) -- (3+1.8,-1-1.8*2/5) -- cycle;
    \fill[blue!10!,path fading=north]
        (2+1.17,3+1.17*4/3) -- (2,3) -- (0,1) -- (-2-1.8,3+1.17*4/3) -- cycle;
    \fill[blue!10!,path fading=west]
        (-2-1.8,3+1.17*4/3) -- (0,1) -- (-2,1) -- (-2-1.8,1+1.8/3) -- cycle;
    \fill[blue!10!,path fading=south]
        (-0.7,-1.7) -- (0,-1) -- (2,-1) -- (4.1,-1.7) -- cycle;
    \fill[blue!10!,path fading=west]
        (-3.8,-1.7) -- (-1,0) -- (-3,1) -- (-3.8,1+0.8*2/5) -- cycle;
    \fill[blue!10!,path fading=south]
        (-1-0.7*2/3,-1.7) -- (-1,-1) -- (-1,0) -- (-3.8,-1.7) -- cycle;

    \draw[color=blue] (3+0.4,4+0.4*7/5) -- (3,4) -- (1,1) -- (1,0) -- (3,-1) -- (3+1.8,-1-1.8*2/5);
    \draw[color=blue] (2+1.17,3+1.17*4/3) -- (2,3) -- (0,1) -- (-2,1) -- (-2-1.8,1+1.8/3);
    \draw[color=blue] (-0.7,-1.7) -- (0,-1) -- (2,-1) -- (4.1,-1.7);
    \draw[color=blue] (-1-0.7*2/3,-1.7) -- (-1,-1) -- (-1,0) -- (-3,1) -- (-3.8,1+0.8*2/5);

    \node[fill=blue,circle,inner sep=1.2pt] at (3,4) {};
    \node[fill=blue,circle,inner sep=1.2pt] at (1,1) {};
    \node[fill=blue,circle,inner sep=1.2pt] at (1,0) {};
    \node[fill=blue,circle,inner sep=1.2pt] at (3,-1) {};
    \node[fill=blue,circle,inner sep=1.2pt] at (2,3) {};
    \node[fill=blue,circle,inner sep=1.2pt] at (0,1) {};
    \node[fill=blue,circle,inner sep=1.2pt] at (-2,1) {};

    \node[fill=blue,circle,inner sep=1.2pt] at (-1,-1) {};
    \node[fill=blue,circle,inner sep=1.2pt] at (-1,0) {};
    \node[fill=blue,circle,inner sep=1.2pt] at (-3,1) {};
    \node[fill=blue,circle,inner sep=1.2pt] at (0,-1) {};
    \node[fill=blue,circle,inner sep=1.2pt] at (2,-1) {};

    \node[fill=blue,circle,inner sep=0.8pt] at (1,-1) {};
    \node[fill=blue,circle,inner sep=0.8pt] at (-1,1) {};
    \node[fill=blue,circle,inner sep=0.8pt] at (1,2) {};

    \node[right] at (1-0.03,0.5) {$a_0$};
    \node[right] at (2-2/13,2.5-3/13) {$a_2$};

    \node[right] at (2-0.06,-0.4) {$a_{-2}$};

    \node[above left] at (1.08,2-0.02) {$a_1$};

    \node[above] at (-1,0.95) {$a_{-1}$};

    \draw[blue] ([shift=({atan(-1/2)}:0.2)]1,0) arc (atan(-1/2):90:0.2);
    \draw[blue] ([shift=({atan(-2/5)}:0.2)]3,-1) arc (atan(-2/5):90+atan(2):0.2);
    \draw[blue] ([shift=(-90:0.2)]1,1) arc (-90:atan(3/2):0.2);
    \draw[blue] ([shift=({-90-atan(2/3)}:0.2)]3,4) arc (-90-atan(2/3):atan(7/5):0.2);
    \draw[blue] ([shift=({atan(4/3)}:0.2)]2,3) arc (atan(4/3):225:0.2);
    \draw[blue] ([shift=(45:0.2)]0,1) arc (45:180:0.2);
    \draw[blue] ([shift=(0:0.2)]-2,1) arc (0:90+atan(3):0.2);

    \node[right] at (3.15,-0.88) {$a_{-3}$};
    \node[right] at (1.17,0) {$a_{-1}$};
    \node[right] at (1.17,1) {$a_1$};
    \node[right] at (3.08,3.75) {$a_3$};
    \node[left] at (2-0.1,3.23) {$a_2$};
    \node[above] at (-0.05,1.18) {$a_0$};
    \node[above] at (-2.02,1.16) {$a_{-2}$};

    \draw (3,1.22) node[right]{$\cK(C_1)$};
    \draw (-1,3.22) node[right]{$\cK(C_2)$};
  \end{tikzpicture}
  \caption{Полигоны Клейна для}
          {$\alpha=[a_0;a_1,a_2,\ldots],\ -1/\beta=[a_{-1};a_{-2},a_{-3},\ldots]$}
  \label{fig:KP_and_CF}
\end{figure}

Алгебраические числа степени $2$, они же квадратичные иррациональности, порождают полигоны Клейна, обладающие периодической структурой. Следующее утверждение представляет собой обобщение теоремы о существовании нетривиального решения уравнения Пелля и позволяет проинтерпретировать теорему Лагранжа геометрически (подробнее об этом см. в \cite{german_tlyustangelov_mjcnt}).

\begin{proposition}\label{prop:more_than_pelle}
  Пусть $\alpha$, $\beta$ --- различные иррациональные числа. Тогда следующие два утверждения эквивалентны:

  \textup{(а)} $\alpha$ и $\beta$ --- сопряжённые алгебраические числа степени $2$;

  \textup{(б)} $(1,\alpha)$ и $(1,\beta)$ --- собственные вектора некоторого неединичного оператора из $\Sl_2(\Z)$.
\end{proposition}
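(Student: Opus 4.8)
The plan is to translate the eigenvector condition into a statement about a single integer quadratic polynomial shared by $\alpha$ and $\beta$, and then to recognize the determinant-$1$ condition as a Pell-type equation. Write a general $A=\left(\begin{smallmatrix} a & b\\ c & d\end{smallmatrix}\right)\in\Sl_2(\Z)$. The relation $A\,(1,\alpha)^{\top}=\lambda\,(1,\alpha)^{\top}$ unwinds to $\lambda=a+b\alpha$ together with $c+d\alpha=\lambda\alpha$; eliminating $\lambda$ shows that $\alpha$ is a root of
\[ b\,t^2+(a-d)\,t-c=0, \]
and the identical computation for $(1,\beta)$ shows that $\beta$ is a root of the \emph{same} polynomial. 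This single observation is the bridge between the two formulations: the coefficient triple $(b,\,a-d,\,-c)$ is exactly what must be proportional to a minimal polynomial of $\alpha$.

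For (б)$\Rightarrow$(а) I would argue as follows. Assume $A\neq\id$ has $(1,\alpha)$ and $(1,\beta)$ as eigenvectors. Since $\alpha\neq\beta$, these vectors are linearly independent and thus form an eigenbasis, so $A$ is diagonalisable with eigenvalues $\lambda_1,\lambda_2$ that are roots of the integer characteristic polynomial $t^2-(\trace A)\,t+1$. If $\lambda_1=\lambda_2$, then $A$ is scalar, and membership in $\Sl_2(\Z)$ forces $A=\pm\id$; discarding $A=\id$, only $A=-\id$ survives. In the remaining case $\lambda_1\neq\lambda_2$, two distinct \emph{integer} eigenvalues with product $\det A=1$ are impossible, so $\lambda_1,\lambda_2$ are irrational; consequently $b\neq0$, and $b\,t^2+(a-d)\,t-c$ is a genuine degree-$2$ integer polynomial with the two distinct irrational roots $\alpha,\beta$. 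Hence $\alpha,\beta$ are conjugate quadratic irrationalities, which is (а).

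For the converse (а)$\Rightarrow$(б) I would run the construction backwards and invoke Pell. Let $\alpha,\beta$ be the conjugate roots of an irreducible $p\,t^2+q\,t+r\in\Z[t]$ with discriminant $D=q^2-4pr>0$. Matching coefficients as above, set $b=pm$, $c=-rm$, $a-d=qm$ for an integer parameter $m$ and $a+d=n$; then $a,d$ are determined and a direct computation gives
\[ \det A=\frac{n^2-(q^2-4pr)\,m^2}{4}=\frac{n^2-D\,m^2}{4}. \]
Thus $\det A=1$ is precisely the equation $n^2-D\,m^2=4$. Since $D>0$ is not a perfect square, Dirichlet's theorem on units (equivalently the classical solvability of Pell's equation) provides infinitely many solutions $(n,m)$ with $m\neq0$; the congruence $D\equiv q^2\pmod 4$ yields $n^2\equiv(qm)^2\pmod 4$, so that $a=(n+qm)/2$ and $d=(n-qm)/2$ are automatically integers. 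Any solution with $m\neq0$ then gives $A\in\Sl_2(\Z)$ with $b=pm\neq0$, hence $A\neq\pm\id$, distinct eigenvalues $\tfrac{n\pm m\sqrt{D}}{2}$, and eigenvectors $(1,\alpha),(1,\beta)$, establishing (б).

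The only genuinely non-formal ingredient is the existence of a nontrivial solution of $n^2-D\,m^2=4$, i.e. of a unit of infinite order in the corresponding real quadratic order; this is exactly the ``generalisation of the existence of a nontrivial Pell solution'' that the text alludes to, and it is where Dirichlet's theorem does the real work. The one subtlety I expect to require an explicit line of care is the scalar matrix $-\id$ in the direction (б)$\Rightarrow$(а): for it every vector is an eigenvector, so it must either be excluded outright or the hypothesis read as asserting that $(1,\alpha)$ and $(1,\beta)$ are the two \emph{distinct} eigenlines of $A$ (equivalently, that $A$ is non-scalar).
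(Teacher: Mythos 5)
Your proof is correct, but it takes a genuinely more hands-on route than the paper. The paper does not prove Proposition \ref{prop:more_than_pelle} by a direct $2\times2$ computation: it notes that the statement (together with its $n$-dimensional version, Proposition \ref{prop:more_than_pelle_n_dim}) follows from Lemmas \ref{l:eigenvector_produces_a_field_basis} and \ref{l:field_basis_produces_an_eigenvector}. There, the implication (б)$\Rightarrow$(а) comes from showing that $1,\alpha$ must be a $\Q$-basis of $\Q(\lambda)$, since a rational linear dependence would make the characteristic polynomial reducible; and (а)$\Rightarrow$(б) is obtained by taking a unit $\e$ of infinite order and norm $1$ in the module $M=\Z+\Z\alpha$ and reading off the matrix of multiplication by $\e$ in the basis $1,\alpha$, the existence of $\e$ being Dirichlet's unit theorem. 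Your argument replaces the module language by the explicit quadratic $bt^2+(a-d)t-c$ and the explicit Pell equation $n^2-Dm^2=4$; this is the same mechanism made concrete (a solution of $n^2-Dm^2=4$ is precisely a unit of the quadratic order of discriminant $D$), and it buys a self-contained elementary proof --- including the step, glossed over when one simply invokes Lemma \ref{l:eigenvector_produces_a_field_basis}, that a non-scalar $A\in\Sl_2(\Z)$ with a real irrational eigendirection is automatically hyperbolic --- at the cost of not generalizing to $n\ge3$, where the paper's formulation in terms of units of $\Z+\Z\alpha_1+\dots+\Z\alpha_{n-1}$ is the one actually needed later. Your closing remark about $-I_2$ is well taken: as literally stated, condition (б) is satisfied by $A=-I_2$ for arbitrary distinct $\alpha,\beta$, so the hypothesis must indeed be read as \emph{non-scalar} (equivalently, hyperbolic, which is how the paper's lemmas phrase it); flagging and resolving this is a point in your favour, not a gap.
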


Ясно, что если $A$ --- оператор из предложения \ref{prop:more_than_pelle}, то и он, и все его степени сохраняют геометрическую цепную дробь $\cf(l_1,l_2)$ (то есть отображают объединение всех четырёх парусов на себя). При этом $A(l_1)=l_1$, $A(l_2)=l_2$. Соответственно, геометрический смысл палиндромичности периода цепной дроби квадратичной иррациональности описывается следующим утверждением.

\begin{proposition}\label{prop:geometry_of_palindromicity}
  Пусть $\alpha$ и $\beta$ --- сопряжённые алгебраические числа степени $2$. Пусть, как и прежде, $l_1$ и $l_2$ порождены векторами $(1, \alpha)$ и $(1, \beta)$. Тогда следующие два утверждения эквивалентны:

  \textup{(а)} период цепной дроби числа $\alpha$ является циклическим палиндромом;

  \textup{(б)} существует оператор $G\in\Gl_2(\Z)$, который сохраняет геометрическую цепную дробь $\cf(l_1,l_2)$, но для которого подпространства $l_1$, $l_2$ не являются собственными.

  В частности, для такого оператора $G$ справедливо $G(l_1)=l_2$, $G(l_2)=l_1$.
\end{proposition}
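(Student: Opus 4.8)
The plan is to translate everything into statements about the group of $\Gl_2(\Z)$-operators preserving $\cf(l_1,l_2)$ and to reduce the palindromicity of the period to an integral conjugacy of a single fixed operator. First I would record a reduction of condition (б). Since the lines $l_1,l_2$ are exactly the asymptotic directions of the four sails, any $\Gl_2(\Z)$-operator preserving $\cf(l_1,l_2)$ permutes the set $\{l_1,l_2\}$; conversely, because each Klein polygon $\cK(C_i)=\conv(C_i\cap\Z^2\setminus\{\vec 0\})$ is determined canonically by the angle $C_i$ and the lattice $\Z^2$, any $G\in\Gl_2(\Z)$ preserving $\{l_1,l_2\}$ automatically permutes the four angles and hence preserves $\cf(l_1,l_2)$. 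Thus for $G\in\Gl_2(\Z)$ preserving the continued fraction is equivalent to preserving the pair $\{l_1,l_2\}$. A bijection of a two-element set is either the identity or the transposition, so an operator for which neither $l_1$ nor $l_2$ is an eigenspace must interchange them; this already yields the last assertion $G(l_1)=l_2$, $G(l_2)=l_1$. Consequently (б) is equivalent to the existence of a $G\in\Gl_2(\Z)$ interchanging $l_1$ and $l_2$.

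Next I would bring in the Dirichlet operator $A\in\Sl_2(\Z)$ supplied by Proposition \ref{prop:more_than_pelle}, whose eigenvectors are $(1,\alpha)$ and $(1,\beta)$; after replacing $A$ by a suitable power we may assume its eigenvalues are $\lambda>1$ along $l_1$ and $\lambda^{-1}$ along $l_2$. If $G$ interchanges $l_1$ and $l_2$, then a direct computation in the eigenbasis shows that $G$ interchanges the two eigenvalues of $A$, so $GAG^{-1}=A^{-1}$. Conversely, any integral $G$ with $GAG^{-1}=A^{-1}$ must send each eigenline of $A$ to the eigenline carrying the reciprocal eigenvalue, i.e. interchange $l_1$ and $l_2$. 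Hence (б) is equivalent to the \emph{integral reversibility} of $A$: the existence of $G\in\Gl_2(\Z)$ with $GAG^{-1}=A^{-1}$.

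It remains to match this reversibility with condition (а). Here I would normalize by an auxiliary $\Gl_2(\Z)$-transformation (which changes nothing in (а) or (б)) so that $\alpha>1$ and $-1<\beta<0$; then $\alpha=[\overline{a_0;a_1,\ldots,a_t}]$ and, by Galois's theorem quoted in the introduction, $-1/\beta=[\overline{a_t;a_{t-1},\ldots,a_0}]$. Writing $A$, up to sign and a power, as the period matrix $R_{a_0}\cdots R_{a_t}$ built from the palindromic blocks $R_{a}=\left(\begin{smallmatrix} a&1\\ 1&0\end{smallmatrix}\right)$, the reversed period corresponds to $A^{\mathsf T}$, and one checks directly that $A^{\mathsf T}=WA^{-1}W^{-1}$ for the fixed matrix $W=\left(\begin{smallmatrix} 0&1\\ -1&0\end{smallmatrix}\right)$. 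Cyclic shifts of the period correspond to conjugation of $A$ by the partial products $R_{a_0}\cdots R_{a_{j-1}}$, so the period is a cyclic palindrome precisely when $A$ is $\Gl_2(\Z)$-conjugate to $A^{\mathsf T}$, equivalently to $A^{-1}$. Combining the three steps gives the equivalence of (а) and (б), and the conjugator produced is the desired line-interchanging $G$.

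The main obstacle I expect is this last step: turning the \emph{cyclic} palindromicity — a symmetry only up to a cyclic shift — into an honest integral conjugacy, while keeping track of the partial-product conjugators and of the parity of the period length (which governs whether one should work with $A$ or $A^2$ and whether $\det G=\pm1$). To keep the cyclic-shift bookkeeping transparent I would lean on the sail picture: reading the edge lengths and integer angles along one sail of $C_1$ forwards and backwards realizes the reversal $a_k\mapsto a_{t-k}$ geometrically, and an interchange of $l_1,l_2$ is exactly an orientation-reversing self-map of that sail. I would then use the matrix identities above only to certify that the resulting reversal is induced by an honest element of $\Gl_2(\Z)$.
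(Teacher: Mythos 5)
Your attempt cannot be checked against a proof in the paper, because the paper gives none: Proposition~\ref{prop:geometry_of_palindromicity} is quoted from the literature, and the proofs of all facts of this subsection are explicitly deferred to \cite{german_tlyustangelov_mjcnt}. Judged on its own merits, your first two reductions are correct and complete: preservation of $\cf(l_1,l_2)$ by $G\in\Gl_2(\Z)$ is equivalent to preservation of the pair $\{l_1,l_2\}$; an operator fixing neither line must interchange them (which gives the final assertion of the proposition); and such an interchange is equivalent to $GAG^{-1}=A^{-1}$ for the operator $A$ of Proposition~\ref{prop:more_than_pelle}, normalized so that its eigenvalues are $\lambda>1$ on $l_1$ and $\lambda^{-1}$ on $l_2$.

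The genuine gap is the last link of your chain, which is where all the content of the proposition actually sits. The implication from a cyclic palindrome to conjugacy of $A$ with $A^{\top}$ is indeed routine: a cyclic shift is conjugation by a partial product $R_{a_0}\cdots R_{a_{j-1}}$, and a symmetric word yields a symmetric matrix. But the converse --- that $\Gl_2(\Z)$-conjugacy of $A$ to $A^{\top}$ forces the period to be a cyclic palindrome --- requires knowing that the cyclic word $(a_0,\dots,a_t)$ is a \emph{complete} invariant of the conjugacy class, i.e.\ that two products $R_{a_0}\cdots R_{a_t}$ and $R_{b_0}\cdots R_{b_s}$ of blocks $R_a=\left(\begin{smallmatrix} a&1\\ 1&0\end{smallmatrix}\right)$ with positive entries are $\Gl_2(\Z)$-conjugate only if the words are cyclic shifts of one another. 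That is a classical but nontrivial statement of reduction theory, and you neither prove nor cite it; in your text it is carried by a single \emph{so}. On top of it sits the sign and parity issue you yourself flag: $\det(R_{a_0}\cdots R_{a_t})=(-1)^{t+1}$, so for odd period length the period matrix lies outside $\Sl_2(\Z)$ and the identity $P^{\top}=WP^{-1}W^{-1}$ acquires a minus sign --- harmless for interchanging eigenlines, but something that must be tracked in the dictionary between conjugacy classes and cyclic words. Your proposed fallback, reading the integer lengths and integer angles along one sail forwards and backwards, is precisely the argument that closes this gap (and is the route taken in the cited reference), but in your write-up it remains a declared intention rather than a proof. As written, the argument establishes (а)$\Rightarrow$(б) and reduces (б)$\Rightarrow$(а) to an unproven classification statement.
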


\subsection{Многомерные цепные дроби} \label{sub_2}

Конструкция, описанная в предыдущем пункте, имеет естественное многомерное обобщение. Пусть $l_1,\ldots,l_n$ --- одномерные подпространства пространства $\R^n$, линейная оболочка которых совпадает со всем $\R^n$. Тогда гиперпространства, натянутые на всевозможные $(n-1)$-наборы из этих подпространств, разбивают $\R^n$ на $2^n$ симплициальных конусов. Будем обозначать множество этих конусов через
\[ \mathcal{C}(l_1, \ldots, l_n).\]
Симплициальный конус с вершиной в начале координат $\vec{0}$ будем называть \emph{иррациональным}, если линейная оболочка любой его гиперграни не содержит целых точек, кроме начала координат  $\vec{0}$.

\begin{definition}
  Пусть  $C$ --- иррациональный конус, $C \in \mathcal{C}(l_1, \ldots, l_n)$. Выпуклая оболочка $\cK(C) = \conv(C\cap\Z^{n}\setminus\{\vec{0}\} )$ и его граница  $\partial(\cK(C))$ называются соответственно \emph{полиэдром Клейна} и \emph{парусом Клейна}, соответствующими конусу $C$. Объединение же парусов
  \[\cf(l_1, \ldots, l_n) = {\underset{C \, \in \, \mathcal{C}(l_1, \ldots, l_n)}{\bigcup}} \partial(\cK(C))\]
  называется \emph{$(n-1)$-мерной геометрической цепной дробью}.
\end{definition}

В иррациональном случае каждый полиэдр Клейна $\cK(C)$ является обобщённым многогранником, то есть его пересечение с любым компактным многогранником --- также компактный многогранник (доказательство см. в \cite{moussafir}). В этом случае каждый парус $\partial(\cK(C))$ является полиэдральной поверхностью, гомеоморфной $\R^{n-1}$ и состоящей из $(n-1)$-мерных многогранников, некоторые из которых, вообще говоря, могут быть неограниченными (отсутствие неограниченных граней равносильно иррациональности двойственного к $C$ конуса --- подробнее см. в \cite{german_matsbornik}). Грани полиэдра Клейна, будучи многомерными аналогами рёбер полигонов Клейна, играют роль неполных частных. Эта аналогия проявилась в результатах, полученных в работах \cite{german_matsbornik}, \cite{german_bordeaux}, \cite{german_lak_lagrange} (см. также замечательную книгу \cite{karpenkov_book}).

Про обыкновенные цепные дроби алгебраических чисел степени $n\geq3$ известно не много, не известно даже, могут ли неполные частные такого числа быть чем-то ограничены. Геометрические же цепные дроби, строящиеся по алгебраическим числам, обладают периодической структурой --- так же, как полигоны Клейна, строящиеся по квадратичным иррациональностям. Дело в том, что алгебраические числа степени $n$ тесно связаны с операторами из $\Gl_{n}(\Z)$. Например, справедливо следующее утверждение, обобщающее предложение \ref{prop:more_than_pelle}. Напомним, что оператор из $\Gl_{n}(\Z)$ с вещественными собственными значениями, характеристический многочлен которого неприводим над $\Q$, называется \emph{гиперболическим}.

\begin{proposition}\label{prop:more_than_pelle_n_dim}
  Числа $1,\alpha_1,\ldots,\alpha_{n-1}$ образуют базис некоторого вполне вещественного расширения $K$ поля $\Q$ тогда и только тогда, когда вектор $(1,\alpha_1,\ldots,\alpha_{n-1})$ является собственным для некоторого гиперболического оператора $A\in\Sl_n(\Z)$.
  При этом вектора $(1,\sigma_i(\alpha_1),\ldots,\sigma_i(\alpha_{n-1}))$, $i=1,\ldots,n$, где $\sigma_1(=\id),\sigma_2,\ldots,\sigma_n$ --- все вложения $K$ в $\R$, образуют собственный базис оператора $A$.
\end{proposition}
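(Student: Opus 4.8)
The plan is to derive both implications from the multiplicative structure of the field $K=\Q(\alpha_1,\ldots,\alpha_{n-1})$: Dirichlet's unit theorem drives the forward direction, while the converse is essentially linear algebra over $\Q(\lambda)$.

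\emph{Forward direction.} Assume $1,\alpha_1,\ldots,\alpha_{n-1}$ is a basis of a totally real field $K$ of degree $n$. I would set $M=\Z\cdot 1+\Z\alpha_1+\cdots+\Z\alpha_{n-1}$, a full lattice in $K$, and pass to its order of multipliers $\cO_M=\{\xi\in K:\xi M\subseteq M\}$. Since $K$ is totally real of degree $n$, Dirichlet's unit theorem gives $\cO_M^\times$ of rank $n-1$. For a unit $\xi\in\cO_M^\times$, multiplication by $\xi$ maps $M$ isomorphically onto itself, so in the basis $1,\alpha_1,\ldots,\alpha_{n-1}$ it is an integer matrix $A$ with $\det A=\norm(\xi)=\pm1$. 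Applying the real embeddings $\sigma_1,\ldots,\sigma_n$ to the defining relations $\xi\alpha_j=\sum_k a_{kj}\alpha_k$ shows that each $(1,\sigma_i(\alpha_1),\ldots,\sigma_i(\alpha_{n-1}))$ is an eigenvector with eigenvalue $\sigma_i(\xi)$ --- after replacing $A$ by its transpose, a harmless relabelling that preserves $\Sl_n(\Z)$ and the characteristic polynomial. All eigenvalues are real because $K$ is totally real, and these $n$ eigenvectors, belonging to distinct eigenvalues, form the claimed eigenbasis.

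The one delicate point --- and the main obstacle --- is to arrange simultaneously that $A\in\Sl_n(\Z)$ and that $A$ is hyperbolic. Hyperbolicity requires the characteristic polynomial $\prod_i(X-\sigma_i(\xi))$ to be irreducible over $\Q$, i.e.\ $\xi$ must be a primitive element $\Q(\xi)=K$; and $\det A=+1$ requires $\norm(\xi)=1$. I would obtain such a $\xi$ as follows: the norm-one units form a subgroup $U\le\cO_M^\times$ of rank $n-1$, and each of the finitely many proper subfields $K'\subsetneq K$ meets $U$ in a subgroup of rank $<n-1$ (being totally real of degree $<n$). As $\Q$ is infinite, the corresponding finitely many proper $\Q$-subspaces cannot cover $U\otimes\Q\cong\Q^{n-1}$, so some unit of $U$ lies in no proper subfield and is therefore primitive; this $\xi$ makes $A$ hyperbolic with $\det A=1$.

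\emph{Converse.} Suppose $Av=\lambda v$ with $A\in\Sl_n(\Z)$ hyperbolic and $v=(1,\alpha_1,\ldots,\alpha_{n-1})$. Then $K=\Q(\lambda)$ is totally real of degree $n$, since the characteristic polynomial of $A$ is irreducible with $n$ distinct real roots $\lambda=\lambda_1,\ldots,\lambda_n$. As $\lambda$ is a simple eigenvalue, $\ker(A-\lambda I)$ is one-dimensional and, by Cramer's rule, is spanned by a vector with coordinates in $\Q(\lambda)$; normalizing the first coordinate to $1$ forces $\alpha_j\in K$ for all $j$. Applying the embeddings $\sigma_i\colon K\to\R$, $\lambda\mapsto\lambda_i$, to the integral identity $(A-\lambda I)v=0$ yields eigenvectors $v_i=(1,\sigma_i(\alpha_1),\ldots,\sigma_i(\alpha_{n-1}))$ for the distinct eigenvalues $\lambda_i$, hence linearly independent over $\R$; so the matrix $V$ with rows $v_1,\ldots,v_n$ is nonsingular. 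Finally any rational relation $\sum_j c_j\alpha_j=0$ (with $\alpha_0=1$) gives $Vc=0$, whence $c=0$; thus $1,\alpha_1,\ldots,\alpha_{n-1}$ are $\Q$-linearly independent and form a basis of the totally real field $K$, which simultaneously proves the ``moreover'' statement.
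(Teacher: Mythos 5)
Your argument is correct, and its skeleton coincides with the paper's: the converse is the paper's Lemma~\ref{l:eigenvector_produces_a_field_basis} (you prove the $\Q$-linear independence via the nonsingular matrix of conjugates rather than by the paper's contradiction with irreducibility, but this is a cosmetic difference), and the forward direction is the paper's Lemma~\ref{l:field_basis_produces_an_eigenvector}, i.e.\ the matrix of multiplication by a unit of the module $M=\Z+\Z\alpha_1+\dots+\Z\alpha_{n-1}$, transposed. The one place where you genuinely diverge is the existence of a \emph{primitive} unit of \emph{norm one}: the paper disposes of this by citing the existence of a primitive Pisot unit (Theorem 5.2.2 in \cite{bertin_pisot_salem}), whereas you give a self-contained argument --- the norm-one units of $\gU_M$ still have rank $n-1$, each of the finitely many proper subfields contributes a unit subgroup of strictly smaller rank by Dirichlet, and a $\Q$-vector space is not a finite union of proper subspaces. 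This buys two things: the argument is elementary and avoids an external reference, and it produces the primitive unit directly inside $\gU_M$ with norm exactly $1$, so the requirements $A\in\Sl_n(\Z)$ and hyperbolicity are met simultaneously without any further adjustment (passing to powers of a unit, which would be the naive fix for the norm, can in principle destroy primitivity, so your care here is not wasted). The small steps you leave implicit --- that a unit of $\gU_M$ lying in a subfield $K'$ is a unit of $\cO_{K'}$, and that a non-covered element of $U\otimes\Q$ can be rescaled to an actual element of $U$ --- are routine and correct.
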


Предложение \ref{prop:more_than_pelle_n_dim}, равно как и предложение \ref{prop:more_than_pelle}, достаточно известно. Оно также следует из доказываемых нами в параграфе \ref{sec_4} лемм \ref{l:eigenvector_produces_a_field_basis} и \ref{l:field_basis_produces_an_eigenvector}.

\begin{definition}
  Пусть $l_1,\ldots,l_n$ --- собственные подпространства некоторого гиперболического оператора $A\in\Gl_n(\Z)$. Тогда $(n-1)$-мерная геометрическая цепная дробь $\cf(l_1,\ldots,l_n)$ называется \emph{алгебраической}. Мы будем также говорить, что эта дробь \emph{ассоциирована} с оператором $A$ и писать $\cf(A)=\cf(l_1,\ldots,l_n)$. Множество всех $(n-1)$-мерных алгебраических цепных дробей будем обозначать $\gA_{n-1}$.
\end{definition}

Ясно, что если $\cf(l_1,\ldots,l_n)=\cf(A)\in\gA_{n-1}$, то оператор $A$, а также любая его степень, сохраняет цепную дробь $\cf(l_1,\ldots,l_n)$ (то есть отображает объединение всех $2^n$ парусов на себя). Из теоремы Дирихле об алгебраических единицах следует, что существует подгруппа группы $\Gl_{n}(\Z)$, изоморфная $(\Z/2\Z)\times\Z^{n-1}$, каждый элемент которой коммутирует с $A$ и сохраняет $\cf(l_1,\ldots,l_n)$. Доказательство этого факта для полноты изложения мы приводим в параграфе \ref{sec_4}. Целью же данной работы является поиск критерия существования дополнительных симметрий дроби $\cf(l_1,\ldots,l_n)$ в случае $n=3$.

\section{Симметрии многомерных алгебраических цепных дробей и формулировка основного результата}\label{sec_3}

Пусть $A\in\Gl_n(\Z)$ --- гиперболический оператор и $\cf(A)=\cf(l_1, \ldots, l_n)$ --- ассоциированная с ним $(n-1)$-мерная алгебраическая цепная дробь. Будем называть \emph{группой симметрий} этой цепной дроби множество
\[
  \textup{Sym}_{\Z}\big(\cf(A)\big)=
  \Big\{ G\in\Gl_n(\Z) \ \Big|\ G\big(\cf(A)\big)=\cf(A) \Big\}.
\]
Из соображений непрерывности ясно, что условие $G\big(\cf(A)\big)=\cf(A)$ равносильно тому, что $G(l_1 \cup \ldots \cup l_n\big)=l_1\cup\ldots\cup l_n$. Следовательно, для каждого $G\in\textup{Sym}_{\Z}\big(\cf(A)\big)$ однозначно определена перестановка $\sigma_G$, такая что
\begin{equation} \label{eq:repres}
  G(l_{i})=l_{\sigma_G(i)},\quad i=1,\dots,n.
\end{equation}
И обратно, если для $G\in\Gl_n(\Z)$ существует такая перестановка $\sigma_{G}$, что выполняются соотношения \ref{eq:repres}, то $G\in\textup{Sym}_{\Z}\big(\cf(A)\big)$.

\begin{definition}
  Оператор $G\in\textup{Sym}_{\Z}\big(\cf(A)\big)$, такой что $\sigma_G=\id$, будем называть \emph{симметрией Дирихле} дроби $\cf(A)\in\gA_{n-1}$. Группу всех симметрий Дирихле будем называть \emph{группой Дирихле} оператора $A$ и обозначать $\dir(A)$.
\end{definition}

\begin{definition}
  Оператор $G\in\textup{Sym}_{\Z}\big(\cf(A)\big)$, не являющийся симметрией Дирихле, будем называть \emph{палиндромической симметрией} дроби $\cf(A)$. Если множество палиндромических симметрией цепной дроби непусто, то такую цепную дробь будем называть \emph{палиндромичной}.
\end{definition}

Предложение \ref{prop:geometry_of_palindromicity} из пункта \ref{cpt_1} позволяет переформулировать предложение \ref{two_dimension} как критерий палиндромичности (одномерной) геометрической цепной дроби. Для формулировки основного результата данной работы --- аналогичного критерия для двумерной геометрической цепной дроби, остаётся дать следующее определение, естественным образом обобщающее понятие эквивалентности (обыкновенных) цепных дробей.

\begin{definition}\label{def:equivalence}
  Пусть $\vec v_1$, $\vec v_2$ --- вектора в $\R^n$ и пусть их первые координаты равны $1$. Будем говорить, что $\vec v_1$ и $\vec v_2$ \emph{эквивалентны} и писать $\vec v_1\sim\vec v_2$, если существует такой оператор $X\in\Gl_n(\Z)$ и такое $\mu\in\R$, что $X\vec v_1=\mu\vec v_2$.
\end{definition}

\begin{remark}
  В случае $n=2$ отношение $(1,\alpha)\sim(1,\beta)$ равносильно отношению $\alpha\sim\beta$ из параграфа \ref{intro_ch}.
\end{remark}

Следующая теорема является основным результатом данной работы.

\begin{theorem}\label{main_t_1}
  Пусть $\cf(l_1,l_2,l_3)\in\gA_2$ и пусть подпространство $l_1$ порождено вектором $(1, \alpha, \beta)$. Тогда $\cf(l_1, l_2, l_3)$ палиндромична в том и только в том случае, если существует такое алгебраическое число $\omega$ степени $3$ со своими сопряжёнными $\omega'$ и $\omega''$, что выполнено хотя бы одно из следующих условий:

  \textup{(а)} $(1, \alpha, \beta)\sim(1, \omega, \omega'):\hskip 6.5mm \trace(\omega)=\omega + \omega' + \omega'' =0$;

  \textup{(б)} $(1, \alpha, \beta)\sim(1, \omega, \omega'):\hskip 6.5mm \trace(\omega)=\omega + \omega' + \omega'' =1$.
  \\
  Более того, существование $\omega$, удовлетворяющего условию $(\textup{б})$, равносильно существованию $\omega$, удовлетворяющего любому из следующих двух условий:

  \textup{(в)} $(1, \alpha, \beta)\sim(1, \omega, 1/\omega'):\hskip 3.98mm \norm(\omega)=\omega  \omega'  \omega''=1$;

  \hskip 0.18mm
  \textup{(г)} $(1, \alpha, \beta)\sim(1, \omega, -1/\omega'):\ \norm(\omega)=\omega  \omega'  \omega''=-1$.
%  \\
%  Более того, условия $(\textup{б})$, $(\textup{в})$, $(\textup{г})$ эквивалентны.
\end{theorem}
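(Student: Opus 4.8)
\noindent\emph{Набросок плана доказательства.}
Основная идея — перевести существование палиндромической симметрии на алгебраический язык кубического поля $K=\Q(\alpha,\beta)$. По леммам \ref{l:eigenvector_produces_a_field_basis} и \ref{l:field_basis_produces_an_eigenvector} числа $1,\alpha,\beta$ образуют базис вполне вещественного кубического поля $K$, а оператор $A$ отождествляется с умножением на единицу $\xi\in K$, так что его собственные значения суть $\sigma_i(\xi)$. При этом решётка $\Z^3$ отождествляется с полным модулем $M\subset K$, симметрии Дирихле — с умножениями на единицы кольца множителей модуля $M$, а отношение $\sim$ из определения \ref{def:equivalence} — с гомотетичностью модулей (заменой $M$ на $\gamma M$, $\gamma\in K^*$). Именно эту словарную таблицу я бы установил в первую очередь, опираясь на конструкции параграфа \ref{sec_4}.

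Далее я бы показал, что палиндромичность влечёт цикличность поля $K$, и описал бы вид симметрии. Если $G$ — палиндромическая симметрия с перестановкой $\sigma_G$, то $G$ и $A$ одновременно диагонализируемы, поэтому $GAG^{-1}$ лежит в $\Q[A]\cong K$ и является умножением на такой $\xi^*\in K$, что набор сопряжённых к $\xi^*$ есть $\sigma_G$-перестановка набора сопряжённых к $\xi$. Так как $\xi$ порождает $K$ и его сопряжённые различны, $\xi^*$ оказывается сопряжённым к $\xi$, лежащим в $K$; значит, $K$ содержит все три корня минимального многочлена числа $\xi$ и потому является циклическим кубическим полем. Поскольку группа $\mathrm{Gal}(K/\Q)\cong\Z/3\Z$ не содержит элементов порядка $2$, перестановка $\sigma_G$ не может быть транспозицией; следовательно, образ отображения $G\mapsto\sigma_G$ содержится в $A_3$, и любая палиндромическая симметрия имеет вид $G=\mathrm{mult}_c\circ\tau$ (как $\Q$-линейное отображение) для образующей $\tau$ группы Галуа и некоторого $c\in K^*$.

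Затем я бы заметил, что условие $G\in\Gl_3(\Z)$ равносильно равенству $c\,\tau(M)=M$, то есть гомотетичности модулей $M$ и $\tau(M)$. Пользуясь свободой замены $M$ на гомотетичный модуль (то есть отношением $\sim$), вопрос сводится к существованию $\tau$-инвариантного модуля вида $\langle1,\omega,\tau\omega\rangle$ в классе гомотетии: прямая проверка показывает, что такой модуль $\tau$-инвариантен тогда и только тогда, когда $\trace\omega\in\Z$, причём заменами $\omega\mapsto\omega-k$ и $\omega\mapsto-\omega$ (не меняющими модуль) след приводится по модулю $3$ к значению $0$ или $1$ — это в точности условия $(\text{а})$ и $(\text{б})$, для которых сама $\tau$ служит искомой симметрией. Обратный переход (от произвольного $c$ с $c\,\tau(M)=M$ к $\tau$-инвариантному представителю) я бы провёл через теорему Гильберта 90 для циклического расширения $K/\Q$: соображение об объёмах даёт $\norm(c)=\pm1$, при $\norm(c)=1$ получается $c=\tau(\gamma)/\gamma$ и модуль $\gamma M$ инвариантен, а случай $\norm(c)=-1$ приводит к условию $(\text{г})$.

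Наконец, эквивалентности $(\text{б})\Leftrightarrow(\text{в})\Leftrightarrow(\text{г})$ я бы установил явными преобразованиями векторов $(1,\omega,\tau\omega)$, $(1,\omega,1/\tau\omega)$ и $(1,\omega,-1/\tau\omega)$ подходящими операторами из $\Gl_3(\Z)$ и скалярными множителями — это кубический аналог соотношения $\trace=1\Leftrightarrow\norm=1$ из предложения \ref{two_dimension}. Главным препятствием я считаю именно шаг с целочисленностью и теоремой Гильберта 90: нужно аккуратно проследить, что условия на след и норму не только достаточны (где симметрия $\tau$ строится явно), но и необходимы, корректно разобрав нормировку следа по модулю $3$ и разделение случаев $\norm(c)=\pm1$.
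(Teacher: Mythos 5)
Ваш план идёт по существенно иному пути, чем статья. Вы переводите всё на язык полных модулей циклического кубического поля и применяете теорему Гильберта 90; статья же действует геометрически: для симметрии $G$ берётся $F=G_+=(\det G)G$, у которого по лемме \ref{ord_3} есть инвариантные рациональные прямая $l$ и плоскость $\pi$, затем на ближайшей к $\pi$ рациональной плоскости итерационной процедурой «сжимающихся треугольников» строится тройка $\vec z_1,\vec z_2,\vec z_3$, циклически переставляемая оператором $F$ и дающая базис $\Z^3$ (с центром $\vec w$ или без него --- лемма \ref{main_lem}), после чего $F$ явно сопрягается с одной из нормальных форм $F_1,\dots,F_4$, а равенство $\overline{\mathbf{CF}}_2=\overline{\mathbf{CF}}_3=\overline{\mathbf{CF}}_4$ получается бесплатно, поскольку случай (б) леммы \ref{main_lem} даёт все три формы сразу. Ваша словарная таблица (решётка $\leftrightarrow$ модуль $M$, симметрия $\leftrightarrow$ $\mathrm{mult}_c\circ\tau$, отношение $\sim$ $\leftrightarrow$ гомотетия модулей) корректна, и вывод нормальности поля $K$ по существу воспроизводит следствие теоремы \ref{main_t_1}; такая алгебраизация в принципе доводима до полного доказательства.

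Однако два шага требуют исправления. Во-первых, переход от «$\tau$-инвариантного модуля $M'$» к «модулю с базисом вида $1,\omega,\tau\omega$» --- это и есть содержательное ядро (то, что в статье делает лемма \ref{main_lem}), а у вас он не обоснован: вашей «прямой проверкой» покрыта лишь обратная импликация. Нужно заметить, что $\tau$ действует на решётке $M'/(M'\cap\Q)$ ранга $2$ как автоморфизм порядка $3$ с минимальным многочленом $x^2+x+1$ и что кольцо $\Z[x]/(x^2+x+1)\cong\Z[\zeta_3]$ --- область главных идеалов, откуда эта решётка циклична как модуль над ним; без этого аргумента (или его аналога) существование нужного базиса не установлено. Во-вторых, дихотомия по знаку $\norm(c)$ устроена не так, как вы пишете: вместе с $G=\mathrm{mult}_c\circ\tau$ палиндромической симметрией является и $-G=\mathrm{mult}_{-c}\circ\tau$, а $\norm(-c)=-\norm(c)$, так что оба знака встречаются всегда одновременно. Случай $\norm(c)=-1$ не «приводит к условию (г)», а просто сводится к случаю $\norm(c)=1$ заменой $c$ на $-c$ (именно эту роль в статье играет нормировка $G_+=(\det G)G$); условие (г), как и (в), --- лишь переформулировка условия (б) и тоже реализуется множителем нормы $1$ (нормальная форма $F_4$ имеет определитель $1$). Наконец, эквивалентность (б)$\Leftrightarrow$(в)$\Leftrightarrow$(г) нельзя получить преобразованием векторов с одним и тем же $\omega$ --- числа $\omega$ в этих условиях разные, так что и здесь потребуется отдельная конструкция, которую статья извлекает из одновременного приведения $F$ к $F_2$, $F_3$ и $F_4$.
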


Стоит отметить следующее наблюдение, непосредственно вытекающее из теоремы \ref{main_t_1}:

\begin{corollary}
  Если дробь $\cf(l_1, l_2, l_3)\in\gA_2$ палиндромична и подпространство $l_1$ порождено вектором $(1, \alpha, \beta)$, то (кубическое) расширение $\Q(\alpha)$ поля $\Q$ является нормальным.
\end{corollary}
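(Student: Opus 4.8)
План состоит в том, чтобы вывести нормальность поля $\Q(\alpha)$ непосредственно из теоремы \ref{main_t_1}, аккуратно расшифровав отношение эквивалентности из определения \ref{def:equivalence}. Прежде всего замечу, что поскольку $\cf(l_1,l_2,l_3)\in\gA_2$, подпространство $l_1$ является собственным для некоторого гиперболического оператора $A\in\Gl_3(\Z)$; переходя при необходимости к $A^2\in\Sl_3(\Z)$ (который по-прежнему гиперболичен, так как $\alpha$ иррационально и потому квадрат собственного значения не может лежать в $\Q$), можно считать оператор лежащим в $\Sl_3(\Z)$. Тогда по предложению \ref{prop:more_than_pelle_n_dim} числа $1,\alpha,\beta$ образуют базис некоторого вполне вещественного кубического расширения $K$ поля $\Q$. В частности, $\alpha\notin\Q$, а так как степень $[K:\Q]=3$ проста, отсюда немедленно следует равенство $\Q(\alpha)=K$. Таким образом, достаточно показать, что поле $K$ нормально.

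Далее я воспользуюсь критерием палиндромичности. По теореме \ref{main_t_1} выполнено хотя бы одно из условий (а), (б), и в обоих случаях имеет место эквивалентность $(1,\alpha,\beta)\sim(1,\omega,\omega')$, где $\omega$ --- алгебраическое число степени $3$, а $\omega'$ --- одно из его сопряжённых. Согласно определению \ref{def:equivalence}, это означает существование оператора $X\in\Gl_3(\Z)$ и числа $\mu\in\R$, для которых $X(1,\alpha,\beta)^{\top}=\mu(1,\omega,\omega')^{\top}$.

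Ключевой шаг --- извлечь из этого равенства включения $\omega,\omega'\in K$. Первая координата показывает, что $\mu$ равно целочисленной линейной комбинации чисел $1,\alpha,\beta$, откуда $\mu\in K$ и, ввиду обратимости $X$, $\mu\neq0$. Тогда вторая и третья координаты дают $\mu\omega,\mu\omega'\in K$, и, деля на $\mu$, я получаю $\omega\in K$ и $\omega'\in K$. Поскольку $\omega$ имеет степень $3$ над $\Q$ и $\Q(\omega)\subseteq K$, заключаю, что $\Q(\omega)=K$. Тем самым поле $K$ содержит два различных корня $\omega,\omega'$ неприводимого кубического минимального многочлена числа $\omega$; так как сумма всех трёх корней $\trace(\omega)=\omega+\omega'+\omega''$ рациональна, третий корень $\omega''=\trace(\omega)-\omega-\omega'$ также лежит в $K$. Следовательно, $K$ --- поле разложения этого многочлена, то есть нормальное расширение $\Q$.

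Серьёзного препятствия я здесь не ожидаю: всё содержание сводится к наблюдению, что множитель $\mu$ сам принадлежит $K$ (именно это и даёт $\omega'\in K$), после чего утверждение вытекает из стандартной теории Галуа кубических полей. Стоит лишь отдельно отметить, что случаи (в) и (г) разбирать не нужно: по теореме \ref{main_t_1} палиндромичность равносильна выполнению (а) или (б), а оба этих условия напрямую доставляют требуемую эквивалентность именно с вектором $(1,\omega,\omega')$, у которого вторая и третья координаты суть сопряжённые одного и того же кубического числа.
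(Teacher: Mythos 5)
Ваше рассуждение корректно и по существу совпадает с тем, что подразумевается в статье: следствие сформулировано там как непосредственно вытекающее из теоремы \ref{main_t_1}, и именно проделанная Вами расшифровка эквивалентности $(1,\alpha,\beta)\sim(1,\omega,\omega')$ (дающая $\mu\in K$, затем $\omega,\omega'\in K=\Q(\alpha)$ и, наконец, $\omega''=\trace(\omega)-\omega-\omega'\in K$) и есть это пропущенное в тексте рассуждение. Отмечу лишь, что обходной манёвр с переходом к $A^2\in\Sl_3(\Z)$ излишен: лемма \ref{l:eigenvector_produces_a_field_basis} применима к любому гиперболическому оператору из $\Gl_3(\Z)$ и сразу даёт, что $1,\alpha,\beta$ --- базис кубического поля $K$, откуда $\Q(\alpha)=K$ ввиду простоты степени $3$.
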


\begin{remark}
  Отметим также, что теорема \ref{main_t_1} представляет собой результат о симметрии алгебраических конусов. По этой причине аналогичное утверждение справедливо не только для полиэдров Клейна, но и для других геометрических конструкций, обобщающих обыкновенные цепные дроби. Например, для \emph{трёхмерной цепной дроби Минковского--Вороного}. При заданных прямых $l_1,l_2,l_3$ такая дробь определяется как граница объединения всех $\vec 0$-сим\-мет\-рич\-ных параллелепипедов с рёбрами параллельными этим прямым, не содержащих в своей внутренности ненулевых целых точек. Подробнее о таких дробях и о соответствующих \emph{комплексах Минковского--Вороного} см. работы \cite{ustinov_UMN_2015}, \cite{ustinov_karpenkov_JNT_2017}.
\end{remark}

\section{Симметрии Дирихле} \label{sec_4}

%Пусть $A\in\Gl_n(\Z)$ --- гиперболический оператор.
Несложно показать (см., например, \cite{korkina_2dim}), что группа $\dir(A)$ гиперболического оператора $A\in\Gl_n(\Z)$ совпадает с множеством операторов из $\Gl_{n}(\Z)$, представимых как многочлены с рациональными коэффициентами от $A$. Более явное, на наш взгляд, описание этой группы даёт теорема Дирихле об алгебраических единицах.

Напомним, что модуль $M$, содержащийся в конечном расширении $K$ поля $\Q$, называется \emph{полным}, если его ранг максимален, то есть равен $[K:\Q]$. Если $M$ --- полный модуль в поле $K$, то группа
\[
  \gU_M=\big\{ \e\in K \,\big|\, \e M=M \big\}
\]
называется \emph{группой единиц} модуля $M$. Структура группы $\gU_M$ описывается теоремой Дирихле об алгебраических единицах. Нам понадобится следующий её частный случай для вполне вещественных расширений поля $\Q$ (подробнее см., например, в \cite{borevich_shafarevich}).

\vskip 2.2mm
\textsc{Теорема Дирихле.}
  \textit{Пусть $K$ --- вполне вещественное расширение поля $\Q$ степени $n$ и пусть $M$ --- произвольный полный модуль в $K$. Тогда существует такой набор единиц $\e_1,\ldots,\e_{n-1}\in\gU_M$, что любая единица $\e\in\gU_M$ однозначно представляется в виде
  \[\e=\zeta\e_1^{z_1}\ldots\e_{n-1}^{z_{n-1}},\]
  где $z_1,\dots,z_{n-1}\in\Z$ и $\zeta\in\{-1,1\}$. В частности, $\gU_M\cong(\Z/2\Z)\times\Z^{n-1}$.}

%\begin{dirichlet}
%  Пусть $K$ --- вполне вещественное расширение поля $\Q$ степени $n$ и пусть $M$ --- произвольный полный модуль в $K$. Тогда существует такой набор единиц $\e_1,\ldots,\e_{n-1}\in\gU_M$, что любая единица $\e\in\gU_M$ однозначно представляется в виде
%  \[\e=\zeta\e_1^{z_1}\ldots\e_{n-1}^{z_{n-1}},\]
%  где $z_1,\dots,z_{n-1}\in\Z$ и $\zeta\in\{-1,1\}$. В частности, $\gU_M\cong(\Z/2\Z)\times\Z^{n-1}$.
%\end{dirichlet}

\begin{remark}
  Обычно теорема Дирихле формулируется для групп единиц порядков поля $K$ (\emph{порядком} называется полный модуль поля $K$, являющийся кольцом и содержащий число $1$). Но если $M$ --- произвольный полный модуль в $K$, его группа единиц $\gU_M$ в точности совпадает с мультипликативной группой $\gO_M^\times$ \emph{кольца множителей} $\gO_M$ модуля $M$, состоящего из таких чисел $\e\in K$, что $\e M\subset M$. Ясно, что $\gO_M$ является порядком, и, обратно, любой порядок поля $K$ является кольцом множителей для самого себя. Поэтому приведённая формулировка теоремы Дирихле эквивалентна стандартной.
\end{remark}

\begin{lemma}\label{l:eigenvector_produces_a_field_basis}
  Пусть $(1,\alpha_1,\ldots,\alpha_{n-1})$ --- собственный вектор гиперболического оператора $A\in\Gl_n(\Z)$, соответствующий собственному значению $\lambda$. Тогда числа $1,\alpha_1,\ldots,\alpha_{n-1}$ образуют базис поля $\Q(\lambda)$ над $\Q$.
\end{lemma}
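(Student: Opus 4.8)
The plan is to verify two things: that the coordinates $1,\alpha_1,\ldots,\alpha_{n-1}$ all lie in $\Q(\lambda)$, and that they are linearly independent over $\Q$. Since the characteristic polynomial of $A$ is irreducible over $\Q$ of degree $n$, the eigenvalue $\lambda$ satisfies $[\Q(\lambda):\Q]=n$, and this polynomial is separable (we are in characteristic zero), so $\lambda$ is a simple root and the eigenspace for $\lambda$ is one-dimensional. Once both claims are established, we will have $n$ linearly independent elements of the $n$-dimensional $\Q$-vector space $\Q(\lambda)$, which must therefore be a basis.

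First I would show $\alpha_i\in\Q(\lambda)$. The eigenvector equation reads $(A-\lambda I)\vec v=\vec 0$ with $\vec v=(1,\alpha_1,\ldots,\alpha_{n-1})$. The matrix $A-\lambda I$ has entries in $\Q(\lambda)$ and rank $n-1$, so its kernel is a one-dimensional $\Q(\lambda)$-subspace; Gaussian elimination over the field $\Q(\lambda)$ produces a spanning vector $\vec w$ with coordinates in $\Q(\lambda)$. Since $\vec v$ also lies in this kernel (now viewed over $\R$), we have $\vec v=t\vec w$ for some $t\in\R$; comparing first coordinates gives $t w_1=1$, so $w_1\neq 0$ and $t=w_1^{-1}\in\Q(\lambda)$. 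Hence every coordinate of $\vec v=w_1^{-1}\vec w$ lies in $\Q(\lambda)$.

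Next I would prove linear independence over $\Q$ by passing to the conjugates. Let $\sigma_1=\id,\ldots,\sigma_n$ be the $n$ embeddings of $\Q(\lambda)$ into $\R$, with $\lambda_j=\sigma_j(\lambda)$ the pairwise distinct roots of the characteristic polynomial. Applying $\sigma_j$ to the relation $A\vec v=\lambda\vec v$ and using that $A$ has rational (hence $\sigma_j$-fixed) entries shows that $\vec v^{(j)}=(1,\sigma_j(\alpha_1),\ldots,\sigma_j(\alpha_{n-1}))$ is an eigenvector of $A$ for the eigenvalue $\lambda_j$. As eigenvectors for distinct eigenvalues, the vectors $\vec v^{(1)},\ldots,\vec v^{(n)}$ are linearly independent and thus form a basis of $\R^n$. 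Now suppose $\sum_{i=0}^{n-1}c_i\alpha_i=0$ with $\alpha_0:=1$ and $c_i\in\Q$; applying each $\sigma_j$ gives $\sum_i c_i\sigma_j(\alpha_i)=0$, that is, the rational vector $\vec c=(c_0,\ldots,c_{n-1})$ is orthogonal to every $\vec v^{(j)}$. Since these span $\R^n$, we conclude $\vec c=\vec 0$, which proves independence.

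The two linear-algebra reductions are routine; the point requiring the most care is the passage to conjugates in the last paragraph, where one must check that the conjugate coordinate vectors really are eigenvectors for \emph{distinct} eigenvalues — this is exactly where irreducibility (supplying $n$ distinct roots) and the rationality of $A$ enter, so that orthogonality to all of them forces the relation to be trivial. Note that hyperbolicity of $A$ is used only to ensure the eigenvalues are real, so that the embeddings $\sigma_j$ land in $\R$; the algebraic core of the argument needs only the irreducibility of the characteristic polynomial.
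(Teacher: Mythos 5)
Your proof is correct. The first half --- showing each $\alpha_i$ lies in $\Q(\lambda)$ by solving $(A-\lambda I)\vec v=\vec 0$ over the field $\Q(\lambda)$ and normalizing by the first coordinate --- is the paper's argument (the paper states it more tersely: the $\alpha_i$ ``are a solution of a linear system with coefficients in this field''; your observation that the kernel is one-dimensional, so the real solution is proportional to a $\Q(\lambda)$-rational one, is the detail that makes this rigorous). Where you genuinely diverge is the linear-independence step. The paper argues by contradiction: if $\alpha_{n-1}$ were a rational combination of $1,\alpha_1,\ldots,\alpha_{n-2}$, one could produce a rational $n\times n$ matrix $C$ with zero last column satisfying $\det(C-\lambda I_n)=0$, forcing $\lambda$ to be a root of a rational polynomial of degree less than $n$ and contradicting irreducibility of the characteristic polynomial. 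You instead pass to the $n$ conjugate vectors $(1,\sigma_j(\alpha_1),\ldots,\sigma_j(\alpha_{n-1}))$, note that they are eigenvectors of $A$ for the $n$ pairwise distinct eigenvalues and hence form a basis of $\R^n$ (or $\Comp^n$), and annihilate any rational relation by orthogonality to that basis. Both routes rest on the same input --- irreducibility gives $[\Q(\lambda):\Q]=n$ and $n$ distinct conjugates --- but yours re-derives along the way the ``conjugate eigenbasis'' fact that the paper establishes separately in Lemma~3 via the nonvanishing of the discriminant, whereas the paper's contradiction argument is marginally more self-contained in that it never invokes the embeddings at all.
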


\begin{proof}
  Положим $K=\Q(\lambda)$. Из гиперболичности оператора $A$ следует, что $K$ --- вполне вещественное расширение поля $\Q$ степени $n$. При этом числа $\alpha_1,\ldots,\alpha_{n-1}$ содержатся в $K$, ибо являются решением системы линейных уравнений с коэффициентами из этого поля. Предположим, что набор $1,\alpha_1,\dots,\alpha_{n-1}$ не является базисом поля $K$. Тогда они линейно зависимы над $\Q$. Не ограничивая общности, можно считать, что $\alpha_{n-1}$ --- линейная комбинация чисел $1,\alpha_1,\dots,\alpha_{n-2}$ с рациональными коэффициентами. Тогда существует такая матрица $C\in\textup{Mat}_{n\times n}(\Q)$ с нулевым последним столбцом, что $\det(C-\lambda I_n)=0$. Последнее противоречит неприводимости характеристического многочлена оператора $A$. Следовательно, набор $1,\alpha_1,\dots,\alpha_{n-1}$ является базисом поля $K$.
\end{proof}

\begin{lemma}\label{l:eigenvalue_is_a_unit}
  Пусть $B$ --- произвольный оператор из $\Gl_n(\Z)$ и пусть вектор $(1,\alpha_1,\ldots,\alpha_{n-1})$ является собственным вектором $B$, соответствующим собственному значению $\lambda$. Пусть числа $1,\alpha_1,\ldots,\alpha_{n-1}$ линейно независимы над $\Q$. Тогда $\lambda$ является единицей модуля $M=\Z+\Z\alpha_1+\dots +\Z\alpha_{n-1}$.
\end{lemma}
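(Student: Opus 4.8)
The plan is to show that multiplication by $\lambda$ maps the module $M$ bijectively onto itself, using crucially that both $B$ and its inverse are integer matrices. First I would write the eigenvector relation $B\vec v=\lambda\vec v$ for $\vec v=(1,\alpha_1,\ldots,\alpha_{n-1})$ coordinate by coordinate. Setting $\alpha_0=1$ and denoting the entries of $B$ by $b_{ij}\in\Z$, the $i$-th row reads
\[
  \lambda\alpha_i=\sum_{j=0}^{n-1}b_{ij}\alpha_j,\qquad i=0,\ldots,n-1.
\]
Since each right-hand side is an integer combination of the generators of $M$, we obtain $\lambda\alpha_i\in M$ for every $i$, whence $\lambda M\subseteq M$. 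Taking $i=0$ in particular shows $\lambda=\lambda\cdot 1\in M$, so $\lambda$ lies in the field containing $M$ and the assertion ``$\lambda$ is a unit of $M$'' is meaningful.

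Next I would obtain the reverse inclusion by running the identical computation for $B^{-1}$. This is the one place where the hypothesis $B\in\Gl_n(\Z)$ (rather than merely $B\in\textup{Mat}_{n\times n}(\Z)$) is used: the condition $\det B=\pm1$ guarantees that $B^{-1}$ is again an integer matrix, and that $\lambda\ne 0$ so that $\lambda^{-1}$ exists. Since $B^{-1}\vec v=\lambda^{-1}\vec v$, the same row-by-row reading gives $\lambda^{-1}\alpha_i\in M$ for all $i$, i.e. $\lambda^{-1}M\subseteq M$.

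Finally I would combine the two inclusions: from $\lambda^{-1}M\subseteq M$ we get $M=\lambda(\lambda^{-1}M)\subseteq\lambda M$, which together with $\lambda M\subseteq M$ yields $\lambda M=M$, so that $\lambda\in\gU_M$ as required. Equivalently, one may observe that the relations above say precisely that multiplication by $\lambda$ acts on $M$ in the basis $1,\alpha_1,\ldots,\alpha_{n-1}$ by an integer matrix of determinant $\pm1$ (namely the transpose of $B$), and an integer endomorphism of a free module with unimodular matrix is automatically an automorphism. There is no genuine obstacle here beyond correctly invoking invertibility over $\Z$; the assumed linear independence of $1,\alpha_1,\ldots,\alpha_{n-1}$ is what ensures $M$ is a bona fide rank-$n$ module on which this argument takes place.
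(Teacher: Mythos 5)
Your proof is correct and follows essentially the same route as the paper: both arguments rest on the fact that $B$ and $B^{-1}$ are integer matrices, so that multiplication by $\lambda$ sends the $\Z$-basis $1,\alpha_1,\ldots,\alpha_{n-1}$ of $M$ to another $\Z$-basis, i.e.\ $\lambda M=M$. The paper states this as ``$B(M^n)=M^n$, hence $\lambda,\lambda\alpha_1,\ldots,\lambda\alpha_{n-1}$ form a basis of $M$,'' which is exactly your two-inclusion (or unimodular-matrix) formulation written more compactly.
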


\begin{proof}
  Поскольку числа $1,\alpha_1,\dots,\alpha_{n-1}$ линейно независимы над $\Q$, они образуют базис модуля $M$. Поскольку $B(M^n)=M^n$ и $(1,\alpha_1,\dots,\alpha_{n-1})\in M^n$, справедливо $(\lambda,\lambda\alpha_1,\dots,\lambda\alpha_{n-1})\in M^n$. В силу обратимости оператора $B$ числа $\lambda,\lambda\alpha_1,\dots,\lambda\alpha_{n-1}$ также образуют базис $M$. Стало быть, $\lambda\in\gU_M$.
\end{proof}

\begin{lemma}\label{l:field_basis_produces_an_eigenvector}
  Пусть числа $1,\alpha_1,\ldots,\alpha_{n-1}$ образуют базис некоторого % вполне вещественного
  расширения $K$ поля $\Q$ и пусть $\e$ --- единица модуля $M=\Z+\Z\alpha_1+\dots +\Z\alpha_{n-1}$. Тогда существует единственный оператор $B\in\Gl_n(\Z)$, такой что $\e$ является собственным значением $B$, а вектор $(1,\alpha_1,\ldots,\alpha_{n-1})$ является собственным вектором оператора $B$, соответствующим $\e$. При этом $\det B=\norm_{K/\Q}(\e)$, а вектора $(1,\sigma_i(\alpha_1),\ldots,\sigma_i(\alpha_{n-1}))$, $i=1,\ldots,n$, где $\sigma_1(=\id),\sigma_2,\ldots,\sigma_n$ --- все вложения $K$ в $\Comp$, образуют собственный базис оператора $B$.
\end{lemma}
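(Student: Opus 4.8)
The plan is to realize $B$ as the matrix of the multiplication-by-$\e$ operator on the module $M$, written in the basis $1,\alpha_1,\dots,\alpha_{n-1}$; this lemma is essentially the converse of Lemma \ref{l:eigenvalue_is_a_unit}. Write $\alpha_0=1$ and $\vec v=(\alpha_0,\alpha_1,\dots,\alpha_{n-1})$, viewed as a column. Since $\e$ is a unit of $M$, we have $\e M=M$, so in particular each product $\e\alpha_i$ lies in $M$ and is therefore a unique $\Z$-linear combination of the basis: there are integers $B_{ij}$ with $\e\alpha_i=\sum_{j}B_{ij}\alpha_j$. Assembling these into $B=(B_{ij})$ gives an integer operator with $B\vec v=\e\vec v$ by construction, so $\e$ is an eigenvalue and $\vec v$ a corresponding eigenvector. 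Note that the integrality of the $B_{ij}$ is exactly the condition $\e M\subseteq M$, so no choice is involved.

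Next I would establish $B\in\Gl_n(\Z)$ and uniqueness together, both reducing to the $\Q$-linear independence of $1,\alpha_1,\dots,\alpha_{n-1}$. For invertibility, observe that $\e^{-1}$ is again a unit of $M$ (as $\e M=M$ forces $\e^{-1}M=M$), so the same construction yields an integer matrix $B'$ with $B'\vec v=\e^{-1}\vec v$; then $(BB'-I_n)\vec v=\vec 0$, and since the coordinates of $\vec v$ are $\Q$-linearly independent while the entries of $BB'-I_n$ are rational, we conclude $BB'=I_n$, and likewise $B'B=I_n$. Hence $\det B=\pm1$. Uniqueness follows identically: if $B''\in\Gl_n(\Z)$ also satisfies $B''\vec v=\e\vec v$, then $(B-B'')\vec v=\vec 0$ forces $B=B''$.

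Finally I would produce the eigenbasis and compute the determinant. Applying each embedding $\sigma_i\colon K\to\Comp$ to the relations $\e\alpha_k=\sum_j B_{kj}\alpha_j$ and using that the $B_{kj}\in\Z$ are fixed by $\sigma_i$, one gets $\sigma_i(\e)\,\sigma_i(\alpha_k)=\sum_j B_{kj}\,\sigma_i(\alpha_j)$, i.e. each $\vec v_i=(1,\sigma_i(\alpha_1),\dots,\sigma_i(\alpha_{n-1}))$ is an eigenvector of $B$ with eigenvalue $\sigma_i(\e)$. The vectors $\vec v_1,\dots,\vec v_n$ are linearly independent because the matrix $(\sigma_i(\alpha_j))_{i,j}$ is nonsingular, and once they form an eigenbasis one reads off $\det B=\prod_{i=1}^n\sigma_i(\e)=\norm_{K/\Q}(\e)$. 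The one genuinely non-bookkeeping step is this nonsingularity: I expect it to be the main (though standard) obstacle, and I would justify it by noting that the squared determinant of $(\sigma_i(\alpha_j))_{i,j}$ is the discriminant of the basis $1,\alpha_1,\dots,\alpha_{n-1}$, which is nonzero since $K/\Q$, being a finite extension in characteristic $0$, is separable and thus admits exactly $n$ distinct embeddings with nondegenerate trace form.
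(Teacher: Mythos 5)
Your proposal is correct and follows essentially the same route as the paper: realize $B$ as the matrix of multiplication by $\e$ in the basis $1,\alpha_1,\dots,\alpha_{n-1}$ of $M$ (integrality from $\e M\subseteq M$, unimodularity and uniqueness from $\e$ being a unit and from the $\Q$-linear independence of the basis), then apply the embeddings $\sigma_i$ and invoke the nonvanishing of the discriminant $D_{K/\Q}(1,\alpha_1,\dots,\alpha_{n-1})$ to get the eigenbasis and $\det B=\norm_{K/\Q}(\e)$. The only cosmetic difference is that the paper deduces $B\in\Gl_n(\Z)$ directly from the fact that $\e,\e\alpha_1,\dots,\e\alpha_{n-1}$ is another $\Z$-basis of $M$, whereas you construct the inverse matrix explicitly via the unit $\e^{-1}$; both are valid.
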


\begin{proof}
   Модуль $M$ является полным, а числа $1,\alpha_1,\dots,\alpha_{n-1}$ образуют его базис. Поскольку $\e\in\gU_M$, числа $\e,\e\alpha_1,\dots,\e\alpha_{n-1}$ также образуют базис $M$. Следовательно, существует ровно одна матрица $B\in\Gl_n(\Z)$, такая что
  \[
    (\e,\e\alpha_1,\dots,\e\alpha_{n-1})^\top=B(1,\alpha_1,\dots,\alpha_{n-1})^\top.
  \]
  Применяя к обеим частям этого равенства $\sigma_i$, получаем
  \[
    \big(\sigma_i(\e),\sigma_i(\e)\sigma_i(\alpha_1),\dots,\sigma_i(\e)\sigma_i(\alpha_{n-1})\big){}^\top=B\big(1,\sigma_i(\alpha_1),\dots,\sigma_i(\alpha_{n-1})\big){}^\top.
  \]
  При этом вектора $(1,\sigma_i(\alpha_1),\ldots,\sigma_i(\alpha_{n-1}))$, $i=1,\ldots,n$, линейно независимы, поскольку дискриминант $D_{K/\Q}(1,\alpha_1,\dots,\alpha_{n-1})$ отличен от нуля. Стало быть, они образуют собственный базис оператора $B$ и $\det B=\prod_{i=1}^{n}\sigma_i(\e)=\norm_{K/\Q}(\e)$.
\end{proof}

\begin{corollary}\label{cor:eigenvector_determines_all}
  Собственный базис произвольного гиперболического оператора из $\Gl_n(\Z)$ однозначно (с точностью до коэффициентов пропорциональности) определяется любым из его собственных векторов.
\end{corollary}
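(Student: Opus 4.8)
The plan is to read the eigenbasis straight off the explicit description already furnished by Lemma \ref{l:field_basis_produces_an_eigenvector}: once the given eigenvector is normalised, the whole eigenbasis is just the tuple of Galois conjugates of that one vector, and conjugation is an operation intrinsic to the field generated by its coordinates. So the corollary is essentially a repackaging of the three preceding lemmas, and the only genuine point to check is the normalisation.

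First I would fix a hyperbolic $A\in\Gl_n(\Z)$ and one of its eigenvectors $\vec v$, with eigenvalue $\lambda$. Since the characteristic polynomial of $A$ is irreducible over $\Q$, it is separable, so the $n$ eigenvalues are pairwise distinct, every eigenspace is one-dimensional, and the eigenbasis is well defined up to scaling each of its vectors. I would then show that $\vec v$ can be rescaled to have first coordinate $1$. Choosing a representative $\vec w\in K^n$ of the eigenline over $K=\Q(\lambda)$ (the kernel of $A-\lambda I_n$ over $K$ is one-dimensional), the relation $A\vec w=\lambda\vec w$ shows that $U=\mathrm{span}_{\Q}(w_1,\dots,w_n)\subseteq K$ satisfies $\lambda U\subseteq U$, hence is a nonzero $\Q[\lambda]=K$-submodule of the field $K$, so $U=K$. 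Thus the $n$ coordinates span the $n$-dimensional $\Q$-space $K$, i.e.\ they form a basis of $K$; in particular none of them vanishes, and rescaling by $w_1^{-1}$ makes $\vec v$ proportional to a vector $(1,\alpha_1,\dots,\alpha_{n-1})$ with $\alpha_i\in K$.

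Now the conclusion follows by feeding this normalised vector into the earlier lemmas. By Lemma \ref{l:eigenvector_produces_a_field_basis} the numbers $1,\alpha_1,\dots,\alpha_{n-1}$ form a basis of $K$, and by Lemma \ref{l:eigenvalue_is_a_unit} the eigenvalue $\lambda$ is a unit of $M=\Z+\Z\alpha_1+\dots+\Z\alpha_{n-1}$. Applying Lemma \ref{l:field_basis_produces_an_eigenvector} with $\e=\lambda$ yields a unique operator having $\lambda$ as eigenvalue and $(1,\alpha_1,\dots,\alpha_{n-1})$ as eigenvector; by uniqueness this operator is $A$ itself, and the same lemma asserts that its eigenbasis consists of the conjugate vectors $(1,\sigma_i(\alpha_1),\dots,\sigma_i(\alpha_{n-1}))$, $i=1,\dots,n$, where $\sigma_1,\dots,\sigma_n$ are the embeddings of $K$ into $\Comp$ (all real, by hyperbolicity).

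Finally I would explain why this settles the claim: the field $K=\Q(\alpha_1,\dots,\alpha_{n-1})$, and therefore its full set of embeddings $\sigma_i$, are determined by the coordinates of $\vec v$ alone, so the conjugate vectors---and hence every eigenline of $A$---are determined by $\vec v$ up to proportionality (replacing $\vec v$ by $c\vec v$ merely rescales the $i$-th conjugate by $\sigma_i(c)$ and leaves the eigenlines fixed). The step that deserves the most care, and the one I expect to be the main obstacle, is the normalisation: one must verify that an eigenvector of a hyperbolic operator can have no vanishing coordinate, so that Lemmas \ref{l:eigenvector_produces_a_field_basis}--\ref{l:field_basis_produces_an_eigenvector} apply verbatim. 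The module-over-$K$ argument above handles exactly this, and everything else is a direct invocation of the lemmas.
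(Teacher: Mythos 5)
Your argument is correct and follows essentially the same route as the paper, which states this corollary as an immediate consequence of Lemmas \ref{l:eigenvector_produces_a_field_basis}--\ref{l:field_basis_produces_an_eigenvector} without a separate proof. Your extra care with the normalisation step (showing via the $\Q[\lambda]$-module argument that no coordinate of the eigenvector vanishes, so one may rescale the first coordinate to $1$) fills in precisely the detail the paper leaves implicit.
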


Из лемм \ref{l:eigenvector_produces_a_field_basis} и \ref{l:field_basis_produces_an_eigenvector} непосредственно выводится предложение \ref{prop:more_than_pelle_n_dim}, поскольку, как известно, в любом вещественном расширении поля $\Q$ существует примитивный элемент, являющийся единицей (верно даже большее: в качестве такой единицы можно выбрать некоторое число Пизо, см. теорему 5.2.2 в книге \cite{bertin_pisot_salem}). Основной же целью данного параграфа является доказательство следующего утверждения о структуре группы $\dir(A)$. Отметим, что это утверждение уточняет в чисто вещественном случае следствие 17.10 из книги \cite{karpenkov_book} (см. там же предложение 17.11 и предшествующее ему обсуждение).

\begin{proposition}
  Для любого гиперболического оператора $A\in\Gl_n(\Z)$ справедливо
  \[
    \dir(A)\cong(\Z/2\Z)\times\Z^{n-1}.
  \]
\end{proposition}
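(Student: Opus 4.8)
The plan is to realise $\dir(A)$ as an isomorphic copy of the unit group $\gU_M$ of a suitable full module $M$, and then read off the answer from the stated Dirichlet theorem, which gives $\gU_M\cong(\Z/2\Z)\times\Z^{n-1}$. The three lemmas proved above are exactly the bridge between operators and units that makes this work.

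First I would fix an eigenvector $\vec v=(1,\alpha_1,\ldots,\alpha_{n-1})$ of $A$, spanning say $l_1$, with eigenvalue $\lambda$. Since $A$ is hyperbolic, all its eigenvalues are real, so $K=\Q(\lambda)$ is a totally real field of degree $n$, and by Lemma \ref{l:eigenvector_produces_a_field_basis} the numbers $1,\alpha_1,\ldots,\alpha_{n-1}$ form a basis of $K$; hence the module $M=\Z+\Z\alpha_1+\dots+\Z\alpha_{n-1}$ is full. Applying the embeddings $\sigma_1(=\id),\ldots,\sigma_n$ of $K$ to the coordinates of $\vec v$ (the entries of $A$ being integers, hence fixed by each $\sigma_i$) shows that the remaining eigenvectors of $A$ are precisely $\vec v_i=(1,\sigma_i(\alpha_1),\ldots,\sigma_i(\alpha_{n-1}))$, spanning the lines $l_i$.

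Next I would define a map $\Phi\colon\dir(A)\to\gU_M$ as follows. If $G\in\dir(A)$ then $\sigma_G=\id$, so $G(l_1)=l_1$; as $l_1$ is one-dimensional this forces $G\vec v=\mu\vec v$ for some real $\mu\neq0$. Because $G\in\Gl_n(\Z)$ and $1,\alpha_1,\ldots,\alpha_{n-1}$ are linearly independent over $\Q$, Lemma \ref{l:eigenvalue_is_a_unit} yields $\mu\in\gU_M$, and I set $\Phi(G)=\mu$. This is a homomorphism, since $G_1G_2\vec v=\mu_1\mu_2\vec v$, and it is injective by the uniqueness clause of Lemma \ref{l:field_basis_produces_an_eigenvector}: the operator of $\Gl_n(\Z)$ with eigenvector $\vec v$ and eigenvalue $\mu$ is unique, so $G$ is recovered from $\mu$. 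For surjectivity I would take an arbitrary $\e\in\gU_M$ and apply Lemma \ref{l:field_basis_produces_an_eigenvector} to obtain the unique $B\in\Gl_n(\Z)$ with $B\vec v=\e\vec v$; the same lemma asserts that the vectors $(1,\sigma_i(\alpha_1),\ldots,\sigma_i(\alpha_{n-1}))$ form an eigenbasis of $B$. But these are exactly the generators of $l_1,\ldots,l_n$ found above, so $B$ fixes every $l_i$, i.e. $\sigma_B=\id$ and $B\in\dir(A)$ with $\Phi(B)=\e$. Thus $\Phi$ is an isomorphism and $\dir(A)\cong\gU_M\cong(\Z/2\Z)\times\Z^{n-1}$.

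The step demanding the most care is surjectivity: one must check that the operators produced by Lemma \ref{l:field_basis_produces_an_eigenvector} actually lie in $\dir(A)$, not merely in $\textup{Sym}_{\Z}\big(\cf(A)\big)$. This is precisely where one matches the eigenbasis of $B$ coming from the field embeddings with the eigenlines $l_i$ of $A$ (equivalently, one invokes Corollary \ref{cor:eigenvector_determines_all}, since the whole eigenbasis is pinned down by the single eigenvector $\vec v$). Everything else is the routine verification that $\Phi$ is a well-defined group homomorphism, so I would not dwell on it.
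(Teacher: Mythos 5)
Your proposal is correct and follows essentially the same route as the paper: identify $\dir(A)$ with $\gU_M$ for $M=\Z+\Z\alpha_1+\dots+\Z\alpha_{n-1}$ via the map $G\mapsto\lambda(G,\vec v)$, using Lemma \ref{l:eigenvalue_is_a_unit} for well-definedness and Lemma \ref{l:field_basis_produces_an_eigenvector} for bijectivity, then invoke Dirichlet's theorem. Your explicit check that the operator $B$ returned by Lemma \ref{l:field_basis_produces_an_eigenvector} fixes every $l_i$ (and hence lies in $\dir(A)$, not merely in $\Gl_n(\Z)$) makes the surjectivity step more transparent than the paper's terse appeal to that lemma, but it is the same argument.
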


\begin{proof}
  Пусть $\cf(A)=\cf(l_1,\ldots,l_n)$ и пусть подпространство $l_1$ порождается вектором $\vec l_1=(1,\alpha_1,\ldots,\alpha_{n-1})$. Положим
  \[
    K=\Q(\alpha_1,\ldots,\alpha_{n-1}),
    \qquad
    M=\Z+\Z\alpha_1+\dots +\Z\alpha_{n-1}.
  \]
  По лемме \ref{l:eigenvector_produces_a_field_basis} числа $1,\alpha_1,\ldots,\alpha_{n-1}$ образуют базис поля $K$. Следовательно, $M$ --- полный модуль в $K$.

  Покажем, что $\dir(A)\cong\gU_M$. Для любого оператора $B\in\dir(A)$ вектор $\vec l_1$ является собственным, стало быть, по лемме \ref{l:eigenvalue_is_a_unit} собственное значение $\lambda(B,\vec l_1)$ оператора $B$, которому соответствует $\vec l_1$, принадлежит $\gU_M$. Следовательно, отображение
  \begin{align*}
    \varphi & : \dir(A)\to\gU_M \\
    \varphi & : B\mapsto\lambda(B,\vec l_1)
  \end{align*}
  корректно определено. По лемме \ref{l:field_basis_produces_an_eigenvector} оно взаимно однозначно. При этом $\varphi$, очевидно, является гомоморфизмом. Стало быть, действительно, $\dir(A)\cong\gU_M$.

  Остаётся применить теорему Дирихле.
\end{proof}

\section{Палиндромические симметрии}\label{sec_5}

Отныне будем считать, что $n=3$, то есть будем рассматривать двумерные цепные дроби. Напомним, что множество всех двумерных алгебраических цепных дробей мы обозначаем через $\gA_2$.

Напомним также, что для каждого $G\in\textup{Sym}_{\Z}\big(\cf(A)\big)$ соотношением \ref{eq:repres} определена перестановка $\sigma_G$.

\begin{lemma}\label{ord_3}
  Пусть $G$ --- палиндромическая симметрия дроби $\cf(l_1,l_2,l_3)\in\gA_2$, ассоциированной с (гиперболическим) оператором $A$.
  Тогда

  \textup{1)} $\textup{ord}({\sigma_{G}}) = 3$;

  \textup{2)} $G^3 = \pm I_3$ (где $I_3$ --- единичный оператор);

  \textup{3)} существуют одномерное и двумерное рациональные подпространства $l$ и $\pi$, такие что $Gl=l$, $G\pi = \pi$ и $l + \pi = \R^3$.
\end{lemma}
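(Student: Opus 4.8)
\emph{План доказательства.} Основная идея --- свести всё к анализу перестановки $\sigma_G$, пользуясь тем, что $G$ переставляет собственные прямые оператора $A$. Зафиксируем собственный вектор $\vec l_1=(1,\alpha_1,\alpha_2)$ прямой $l_1$: по лемме \ref{l:eigenvector_produces_a_field_basis} набор $1,\alpha_1,\alpha_2$ --- базис вполне вещественного кубического поля $K$, а остальные собственные вектора суть $(1,\tau_i(\alpha_1),\tau_i(\alpha_2))$, где $\tau_1=\id,\tau_2,\tau_3$ --- вложения $K$ в $\R$ (лемма \ref{l:field_basis_produces_an_eigenvector}). Тем самым прямая $l_i$ отождествляется с вложением $\tau_i$, а собственное значение $A$ на ней есть $\lambda_i=\tau_i(\lambda)$. Поскольку $G(l_i)=l_{\sigma_G(i)}$, оператор $G$ переводит каждый собственный вектор $\vec e_i$ в кратный собственному: $G\vec e_i=c_i\vec e_{\sigma_G(i)}$. Именно это соотношение лежит в основе всех трёх пунктов.

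\emph{Пункт 1 (главное препятствие).} Самое тонкое --- установить, что если $\sigma_G$ оставляет на месте хотя бы один индекс $k$, то $\sigma_G=\id$, что для палиндромической симметрии невозможно. План таков: рассмотреть вспомогательный оператор $H=GAG^{-1}\in\Gl_3(\Z)$; из $G(l_i)=l_{\sigma_G(i)}$ следует, что $H$ имеет те же собственные прямые, что и $A$, а его собственное значение на $l_j$ равно $\lambda_{\sigma_G^{-1}(j)}$. Тогда оператор $D=AH^{-1}$ целочислен, обратим и сохраняет каждую собственную прямую, то есть $D\in\dir(A)$, причём его собственное значение на $l_j$ равно $\lambda_j/\lambda_{\sigma_G^{-1}(j)}$; на неподвижной прямой $l_k$ оно равно $1$. Остаётся применить описание группы $\dir(A)$: собственное значение $\nu$ оператора $D$ на $\vec l_1$ есть единица поля $K$ (лемма \ref{l:eigenvalue_is_a_unit}), а на прямых $l_1,l_2,l_3$ собственные значения суть сопряжённые $\tau_1(\nu),\tau_2(\nu),\tau_3(\nu)$ (лемма \ref{l:field_basis_produces_an_eigenvector}). Из $\tau_k(\nu)=1$ и инъективности вложения $\tau_k$ получаем $\nu=1$, откуда $D=I_3$ и $GA=AG$. Так как все собственные значения $A$ различны, коммутирующий с ним $G$ сохраняет каждую прямую $l_i$, то есть $\sigma_G=\id$. Следовательно, для палиндромического $G$ перестановка $\sigma_G$ не имеет неподвижных индексов; единственные такие перестановки трёх элементов --- циклы длины $3$, а значит $\textup{ord}(\sigma_G)=3$.

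\emph{Пункт 2.} Так как $\sigma_G$ --- цикл длины $3$, из $G\vec e_i=c_i\vec e_{\sigma_G(i)}$ получаем $G^3\vec e_i=(c_1c_2c_3)\,\vec e_i$ для всех $i$, то есть $G^3=(c_1c_2c_3)I_3$ --- скалярный оператор. Поскольку $G^3\in\Gl_3(\Z)$, скаляр $c_1c_2c_3$ вместе со своим обратным целочислен, откуда $c_1c_2c_3=\pm1$ и $G^3=\pm I_3$.

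\emph{Пункт 3.} Из $G^3=\pm I_3$ следует, что $G$ аннулируется многочленом $x^3\mp1=(x\mp1)(x^2\pm x+1)$, сомножители которого взаимно просты над $\Q$. Поэтому $\R^3$ раскладывается в прямую сумму рациональных $G$-инвариантных подпространств $l=\ker(G\mp I_3)$ и $\pi=\ker(G^2\pm G+I_3)$ (оба заданы целочисленными многочленами от $G$, то есть суть множества решений рациональных линейных систем, а значит рациональны). Множитель $x^2\pm x+1$ неприводим над $\R$, так что $\pi$ не содержит вещественных собственных векторов и имеет чётную размерность; при этом $\pi\neq\{\vec 0\}$ и $l\neq\{\vec 0\}$, ибо $G\neq\pm I_3$ (иначе $\sigma_G=\id$). Отсюда $\dim\pi=2$, $\dim l=1$ и $l+\pi=\R^3$, что и требовалось. После пункта 1 пункты 2 и 3 уже чисто формальны; вся содержательная трудность сосредоточена в арифметическом переходе к $D\in\dir(A)$ и в наблюдении, что единица с сопряжённым, равным $1$, сама равна $1$.
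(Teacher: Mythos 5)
Ваше доказательство корректно, но в ключевом пункте 1 идёт по существенно иному пути, чем авторское. В статье случай $\textup{ord}(\sigma_G)=2$ исключается прямым вычислением: матрица $G$ в собственном базисе оператора $A$ блочно-диагональна, характеристический многочлен $(x-\mu_1)(x^2-\mu_2\mu_3)$ лежит в $\Z[x]$, откуда $\mu_1=\pm1$, и неподвижная прямая оказывается рациональным собственным подпространством $G$ --- противоречие с иррациональностью $l_1$. Вы же доказываете более сильное и более концептуальное утверждение: если $\sigma_G$ имеет хотя бы одну неподвижную точку, то $G$ коммутирует с $A$ и $\sigma_G=\id$; для этого вы строите оператор $D=A(GAG^{-1})^{-1}\in\dir(A)$ и через леммы \ref{l:eigenvalue_is_a_unit} и \ref{l:field_basis_produces_an_eigenvector} показываете, что его собственное значение на $l_1$ --- единица, один из образов которой при вложении равен $1$, а потому и сама она равна $1$. Этот аргумент дороже по используемой технике (нужна вся <<единичная>> машинерия параграфа \ref{sec_4}), но зато дословно переносится на любое $n$: перестановка палиндромической симметрии не имеет неподвижных точек в любой размерности, тогда как авторское вычисление привязано к транспозициям в $S_3$. Пункты 2 и 3 по существу совпадают с авторскими (у авторов --- теорема Гамильтона--Кэли и анализ собственных значений, у вас --- прямое перемножение и разложение $\R^3$ по взаимно простым множителям многочлена $x^3\mp1$; ваш вариант даёт рациональность $\pi$ автоматически, без ссылки на точки $\vec z$, $G^2\vec z$, $G^4\vec z$). Единственная мелкая неточность: из $G\neq\pm I_3$ напрямую следует лишь $\pi\neq\{\vec 0\}$, а $l\neq\{\vec 0\}$ вытекает уже из чётности $\dim\pi$; впрочем, этот аргумент у вас фактически присутствует, так что пробела здесь нет.
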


\begin{proof}
  Случай $\textup{ord}(\sigma_{G}) = 1$ невозможен в силу того, что оператор $G$ не является симметрией Дирихле $\cf(A)$.

  Предположим $\textup{ord}(\sigma_{G}) = 2$. Пусть $\vec l_1,\vec l_2,\vec l_3$ --- произвольные вектора, порождающие подпространства $l_1,l_2,l_3$ соответственно. Изменив при необходимости нумерацию подпространств $l_1, l_2, l_3$, можно считать, что существуют такие вещественные числа $\mu_{1}$, $\mu_{2}$, $\mu_{3}$, что матрица оператора $G$ в базисе $\vec{l}_{1}, \vec{l}_{2}, \vec{l}_{3}$ имеет вид
 \[
   \begin{pmatrix}
     \mu_{1} & 0 & 0 \\
     0 & 0 & \mu_{2} \\
     0 & \mu_{3} & 0
   \end{pmatrix}.
 \]
  Тогда характеристический многочлен оператора $G$ имеет вид
  \[\chi_{G}(x) = (x - \mu_{1})(x^2 - \mu_{2}\mu_{3}) = x^3 - \mu_{1} x^2 - \mu_{2}\mu_{3} x \pm 1 \in \Z[x].\]
  Следовательно, $\mu_{1}$ --- целое число, и при этом $\mu_{1}$ --- корень уравнения $\chi_{G}(x) = 0$, то есть $\mu_{1} = \pm 1$. Стало быть, $l_1$ --- собственное подпространство оператора $G$, соответствующее собственному значению $\mu_{1} = \pm 1$. То есть $l_1$ рационально, что противоречит гиперболичности оператора $A$. Таким образом $\textup{ord}(\sigma_{G}) = 3$.

  Изменив при необходимости нумерацию подпространств $l_1, l_2, l_3$, можно считать, что существуют такие вещественные числа $\mu_{1}$, $\mu_{2}$, $\mu_{3}$, что матрица оператора $G$ в базисе $\vec{l}_{1}, \vec{l}_{2}, \vec{l}_{3}$ имеет вид
  \[
    \begin{pmatrix}
      0 & 0 & \mu_{1} \\
      \mu_{2} & 0 & 0 \\
      0 & \mu_{3} & 0
    \end{pmatrix}.
  \]
  Тогда характеристический многочлен имеет вид
  \[\chi_{G}(x) =  x^3 - \mu_{1}\mu_{2}\mu_{3} = x^3  - \det G.\]
  По теореме Гамильтона-Кэли $G^3 = (\det G)I_3 = \pm I_3$, что доказывает второе утверждение. Далее, поскольку одно из собственных значений равно $\det G$ (и равно $\pm 1$), а два оставшихся --- сопряжённые комплексные числа, у оператора $G$ есть одномерное собственное подпространство $l$ и двумерное инвариантное подпространство $\pi$. При этом, поскольку подпространство $l$ соответствует собственному значению  $\pm 1$, оно рационально. Кроме того, для любой точки $\vec{z} \in \Z^3 \setminus l$ три точки $\vec{z}$, $G^2(\vec{z})$ и $G^4(\vec{z})$ задают плоскость, параллельную $\pi$, следовательно, $\pi$ также рационально.
\end{proof}

\begin{corollary}\label{dif}
  Пусть $\cf(l_1,l_2,l_3)\in\gA_2$ и пусть $G$ --- палиндромическая симметрия $\cf(l_1,l_2,l_3)$.
  Положим
  \[
    G_+=(\det G)G,
    \qquad
    G_-=-(\det G)G.
  \]
  Тогда $G_+$ и $G_-$ --- также палиндромические симметрии $\cf(l_1,l_2,l_3)$ и существует конус $C \in \mathcal{C}(l_1, l_2, l_3)$, такой что $G_+(C)=C$ и $G_-(C)=-C$.
\end{corollary}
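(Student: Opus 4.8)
The plan is to reduce the whole statement to two facts: that $G_-=-G_+$ as operators, and that $-I_3$ is already a (Dirichlet) symmetry of $\cf(l_1,l_2,l_3)$. First I would record the elementary determinant computation. Since $G\in\Gl_3(\Z)$ we have $\det G=\pm1$, so $(\det G)^2=1$, and using $\det(cM)=c^3\det M$ for $3\times3$ matrices,
\[\det G_+=(\det G)^3\det G=(\det G)^4=1,\qquad \det G_-=-(\det G)^4=-1.\]
Moreover $G_-=-(\det G)G=-G_+$, so both $G_+$ and $G_-$ equal $\pm G$. Because the operator $-I_3$ fixes every line $l_i$, it lies in $\dir(A)\subseteq\textup{Sym}_{\Z}(\cf(A))$, and since $\textup{Sym}_{\Z}(\cf(A))$ is a group, $-G=(-I_3)G$ is again a symmetry; hence $G_+,G_-\in\textup{Sym}_{\Z}(\cf(A))$. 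Multiplication by a nonzero scalar does not change the image of a line, so $\sigma_{G_+}=\sigma_{G_-}=\sigma_G$, which has order $3$ by Lemma \ref{ord_3}. In particular neither is a Dirichlet symmetry, so both $G_+$ and $G_-$ are palindromic symmetries, which settles the first assertion.

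For the cone I would exploit that $G_+$ has order exactly $3$. By Lemma \ref{ord_3} we have $G^3=(\det G)I_3$, whence
\[G_+^3=\big((\det G)G\big)^3=(\det G)^3G^3=(\det G)^4 I_3=I_3,\]
and $G_+\neq I_3$ because $\sigma_{G_+}\neq\id$. Thus $\langle G_+\rangle$ is cyclic of prime order $3$. Since $G_+$ permutes the three lines, it permutes the three planes $\langle l_i,l_j\rangle$, and therefore permutes the $8$ cones of $\mathcal{C}(l_1,l_2,l_3)$ (the closures of the connected components of the complement of the union of these planes). Every orbit of this action has size $1$ or $3$, so writing $f$ for the number of fixed cones we get $8=f+3k$ with $k\geq0$, which forces $f\equiv 2\pmod 3$ and in particular $f\geq2$. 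Hence there is a cone $C\in\mathcal{C}(l_1,l_2,l_3)$ with $G_+(C)=C$.

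Finally, applying $G_-=-G_+=(-I_3)G_+$ to this cone yields $G_-(C)=-\big(G_+(C)\big)=-C$, completing the argument. The only genuinely nontrivial step is the existence of a fixed cone, and I expect the heart of the matter to be the counting argument modulo $3$; equivalently, one can make it fully explicit by writing $G_+\vec l_i=c_i\vec l_{\sigma(i)}$, noting that $G_+^3=I_3$ gives $c_1c_2c_3=1$, and checking that the induced action on the sign vectors $(\e_1,\e_2,\e_3)$ labelling the cones then admits a fixed vector. Everything else is routine bookkeeping with determinants and the group structure of $\textup{Sym}_{\Z}(\cf(A))$.
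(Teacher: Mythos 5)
Your proof is correct, and the interesting part --- the existence of a $G_+$-invariant cone --- is obtained by a genuinely different argument from the paper's. The paper takes the rational line $l$ from Lemma \ref{ord_3} (the eigenline of $G$ for the real eigenvalue $\det G$), notes that $l$ lies in no plane $l_i+l_j$ and hence meets the interior of some cone $C$; since $G(l)=l$, the cone $G(C)$ also contains a ray of $l$ in its interior, so $G(C)=\pm C$ with the sign given by $\det G$, whence $G_+(C)=C$. You instead use only that $G_+$ has order exactly $3$ (from $G^3=(\det G)I_3$) and count orbits of $\langle G_+\rangle$ on the $8$ cones: every orbit has size $1$ or $3$, and $8\equiv 2\pmod 3$ forces at least two fixed cones. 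Both arguments are short and complete. The paper's version buys an explicit description of the fixed cone as the one whose interior meets the axis $l$, which is reused later (see параграф \ref{sec_6} and рис.~\ref{fig:3d}); your counting argument is softer, avoids any geometric analysis of $l$, and shows for free that fixed cones exist in a pair $\pm C$. Your handling of the first assertion (via $-I_3\in\dir(A)$ and $\sigma_{G_\pm}=\sigma_G$ of order $3$) is a careful spelling-out of what the paper dismisses as <<очевидно>>; the only superfluous step is the computation of $\det G_\pm$, which is never used.
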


\begin{proof}
  Палиндромичность $G_+$ и $G_-$ очевидна. Пусть $l$ --- прямая из леммы \ref{ord_3}. Поскольку $l$ не содержится в $l_i+l_j$ для любых $i$ и $j$, найдётся такой конус $C\in\cC(l_1,l_2,l_3)$, что $l$ имеет непустое пересечение с его внутренностью. Поскольку $G(C)\in\cC(l_1,l_2,l_3)$, справедливо либо $G(C)=C$, либо $G(C)=-C$, в зависимости от того, $\det G=1$ или $-1$. Соответственно, $G_+(C)=C$ и $G_-(C)=-C$.
\end{proof}

\begin{lemma}\label{main_lem}
  Пусть $G$ --- палиндромическая симметрия дроби $\cf(l_1,l_2,l_3)\in\gA_2$. % двумерной алгебраической цепной дроби $\cf(l_1, l_2, l_3)$
  Положим $F=G_+$ (см. следствие \ref{dif}). Тогда существуют $\vec{z}_1$, $\vec{z}_2$, $\vec{z}_3 \in \Z^3$, такие что
  \[F(\vec{z}_1) = \vec{z}_2,F(\vec{z}_2) = \vec{z}_3,F(\vec{z}_3) = \vec{z}_1\]
  и выполняется хотя бы одно из следующих двух утверждений:

  \textup{(а)} вектора $\vec z_1$, $\vec z_2$, $\vec w$, где $\vec{w}=\frac{1}{3}(\vec{z}_1+\vec{z}_2+\vec{z}_3)$, образуют базис решётки $\Z^3$;

  \textup{(б)} вектора $\vec z_1$, $\vec z_2$, $\vec z_3$ образуют базис решётки $\Z^3$.
\end{lemma}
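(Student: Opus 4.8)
The plan is to analyse $F=G_+$ purely as an integral linear operator of finite order and to reduce the lemma to the structure of $\Z^3$ as a module over it. By Corollary~\ref{dif} and Lemma~\ref{ord_3} the operator $F$ lies in $\Sl_3(\Z)$, satisfies $F^3=I_3$, and comes with a rational fixed line $l=\ker(F-I_3)$ and a rational invariant plane $\pi$ on which $F$ has characteristic polynomial $x^2+x+1$; in particular the eigenvalues of $F$ are $1,\zeta,\zeta^2$, where $\zeta$ is a primitive cube root of unity. I would fix a primitive vector $\vec u$ of the rank-one lattice $\Z^3\cap l$ and set $\Lambda=\Z^3\cap\pi$, a rank-two $F$-invariant lattice. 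Since $F|_\pi$ satisfies $x^2+x+1=0$, the lattice $\Lambda$ is a module over $\Z[\zeta]\cong\Z[x]/(x^2+x+1)$; being torsion-free of rank one over the principal ideal domain $\Z[\zeta]$, it is free, so $\Lambda=\Z\vec v\oplus\Z F\vec v$ for a suitable $\vec v$.

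The whole dichotomy between (а) and (б) I expect to be governed by one integer. Put $\Sigma=I_3+F+F^2$. Because $F\Sigma=\Sigma$, every $\Sigma\vec x$ is fixed by $F$ and integral, hence lies in $\Z^3\cap l=\Z\vec u$; thus $\Sigma(\Z^3)=\Z\,m\vec u$ for a unique $m\ge 1$, and since $\Sigma\vec u=3\vec u$ we get $m\mid 3$, so $m\in\{1,3\}$. Intrinsically $m$ records whether $\Z^3\cong\Z\oplus\Z[\zeta]$ or $\Z^3\cong\Z[C_3]$ as $F$-modules, the index $[\Z^3:\Z\vec u\oplus\Lambda]$ being $3/m$. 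I claim that $m=3$ produces case (а) and $m=1$ produces case (б).

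For $m=3$, choosing $\vec z$ with $\Sigma\vec z=3\vec u$ one has $\Z^3=\Z\vec z\oplus\Lambda$, and $\Sigma(\vec u-\vec z)=0$ shows $\vec u-\vec z\in\Lambda$, whence $\Z^3=\Z\vec u\oplus\Lambda$. Then $\vec z_1=\vec u+\vec v$, $\vec z_2=F\vec z_1=\vec u+F\vec v$, $\vec z_3=F\vec z_2=\vec u-\vec v-F\vec v$ form an $F$-orbit with $\vec z_1+\vec z_2+\vec z_3=3\vec u$, so $\vec w=\vec u$; in the basis $\vec u,\vec v,F\vec v$ the triple $\vec z_1,\vec z_2,\vec w$ has determinant $\pm1$, giving (а). For $m=1$, choose $\vec z$ with $\Sigma\vec z=\vec u$, so $\Z^3=\Z\vec z\oplus\Lambda$, and identify $\Lambda$ with $\Z[\zeta]$ so that $F|_\Lambda$ becomes multiplication by $\zeta$. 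Then $\vec a=(F-I_3)\vec z\in\Lambda$, and since $\Z\vec z+\Z F\vec z+\Z F^2\vec z=\Z\vec z+\Z[\zeta]\vec a$, the orbit $\vec z,F\vec z,F^2\vec z$ is a $\Z$-basis of $\Z^3$ precisely when $\vec a$ generates $\Lambda$ over $\Z[\zeta]$, i.e. when the image of $\vec a$ in $\Z[\zeta]$ is a unit.

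The main obstacle is exactly this last step: forcing $\vec a$ to be a unit when $m=1$. Replacing $\vec z$ by $\vec z+\vec b$ with $\vec b\in\Lambda$ keeps $\Sigma\vec z=\vec u$ and alters $\vec a$ by $(F-I_3)\vec b$, i.e. by $(\zeta-1)\vec b$, so $\vec a$ is well-defined only modulo the prime $\mathfrak p=(\zeta-1)\Z[\zeta]$ over $3$. First I would check $\vec a\notin\mathfrak p$: otherwise some shift $\vec z+\vec b$ would satisfy $(F-I_3)(\vec z+\vec b)=0$, forcing $\vec z+\vec b\in\Z^3\cap l=\Z\vec u$ and hence $\vec u=\Sigma(\vec z+\vec b)\in 3\Z\vec u$, a contradiction. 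Thus the class of $\vec a$ in $\Z[\zeta]/\mathfrak p\cong\mathbb{F}_3$ is nonzero, and as the units $\{\pm1,\pm\zeta,\pm\zeta^2\}$ surject onto $(\Z[\zeta]/\mathfrak p)^\times=\mathbb{F}_3^\times$, I can pick $\vec b$ making the image of $\vec a$ equal to $\pm1$, hence a unit. Then $\Z[\zeta]\vec a=\Lambda$, so $\vec z,F\vec z,F^2\vec z$ span $\Z\vec z+\Lambda=\Z^3$ and, being three in number, form a basis --- this is case (б). The remaining verifications, namely the determinant computation in the $m=3$ case and the elementary bookkeeping with $\Sigma$, are routine.
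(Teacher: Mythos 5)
Your proof is correct, but it takes a genuinely different route from the paper's. The paper argues geometrically: it takes the integer plane $\pi_1$ nearest to $\pi$ among those parallel to it, starts with an arbitrary integer point of $\pi_1\setminus l$, and iteratively shrinks the $F$-invariant triangle $\conv(\vec v,F\vec v,F^2\vec v)$ until it contains no integer points other than its vertices and possibly its center $\vec w$; minimality of the triangle in the nearest lattice plane then yields the basis property, the dichotomy (а)/(б) being decided by whether $\vec w$ is integral. You instead work with the $\Z[C_3]$-module structure of $\Z^3$: the trace operator $\Sigma=I_3+F+F^2$ maps $\Z^3$ onto $m\Z\vec u$ with $m\in\{1,3\}$, and this single invariant cleanly separates the two cases, with the case $m=1$ handled via the freeness of $\Lambda=\Z^3\cap\pi$ over the principal ideal domain $\Z[\zeta]$ and a normalization of the cocycle $\vec a=(F-I_3)\vec z$ modulo the prime $(\zeta-1)$. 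All the steps check out (in particular the exclusion $\vec a\notin(\zeta-1)\Z[\zeta]$ and the lift of the residue of $\vec a$ to a genuine unit are argued correctly), so this is a complete proof. What your approach buys is structural clarity: it identifies the isomorphism type of $\Z^3$ as a $\Z[F]$-module ($\Z\oplus\Z[\zeta]$ versus the regular representation) as the invariant governing the dichotomy, and makes the two alternatives essentially exclusive rather than merely exhaustive. What it costs is reliance on $\Z[\zeta]$ being a PID and on the classification of torsion-free modules over it, whereas the paper's shrinking-triangle procedure is elementary and effectively algorithmic; both arguments are equally specific to $n=3$.
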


\begin{proof}
  Пусть $l$ и $\pi$ --- одномерное и двумерное подпространства из леммы \ref{ord_3}. Рассмотрим ближайшую к $\pi$ рациональную плоскость $\pi_1$, параллельную $\pi$ и не совпадающую с $\pi$ (любую из двух). Тогда $F(\pi_1) = \pi_1$.

  Построим точки $\vec z_1,\vec z_2,\vec z_3\in\pi_1$ при помощи следующей итерационной процедуры. Возьмём произвольную целочисленную точку $\vec{v}_1 \in \pi_1 \setminus l$, и положим $\Delta_1 = \textup{conv}\big(\vec{v}_1, F(\vec{v}_1), F^2(\vec{v}_1)\big)$, $\vec{w} = \frac{1}{3}\big(\vec{v}_1 + F(\vec{v}_1) + F^2(\vec{v}_1)\big)$. Тогда $F(\Delta_1) = \Delta_1$ и $F\vec{w} = \vec{w} \in l$.

  Предположим, мы построили треугольник $\Delta_j$. Если в $\Delta_j$ есть целочисленная точка, отличная от вершин и $\vec{w}$, обозначим её через $\vec{v}_{j+1}$ и определим следующий треугольник как $\Delta_{j+1} = \textup{conv}\big(\vec{v}_{j+1}, F(\vec{v}_{j+1}), F^2(\vec{v}_{j+1})\big)$. Тогда $\Delta_{j+1}$ является собственным подмножеством $\Delta_j$ и при этом $\vec{w} = \frac{1}{3}\big(\vec{v}_{j+1} + F(\vec{v}_{j+1}) + F^2(\vec{v}_{j+1})\big)$.

  Последовательность $(\Delta_j)$ конечна. Пусть $\Delta_k$ --- последний её элемент. Положим $\vec{z}_1 = \vec{v}_k$, $\vec{z}_2 = F(\vec{v}_k)$, $\vec{z}_3 = F^2(\vec{v}_k)$. Напомним, что $\pi_1$ --- ближайшая к $\pi$ рациональная плоскость, параллельная $\pi$. Стало быть, если $\vec{w} \notin \Z^3$, то $\vec{z}_1$, $\vec{z}_2$, $\vec{z}_3$ --- базис $\Z^3$. Если же $\vec{w} \in \Z^3$, то $\vec{z}_1$, $\vec{z}_2$, $\vec{w}$ --- базис $\Z^3$.
\end{proof}

Если задана дробь $\cf(l_1,l_2,l_3)=\cf(A)\in\gA_2$, будем считать, что подпространство $l_1$ порождается вектором $\vec l_1=(1,\alpha,\beta)$. Тогда по лемме \ref{l:eigenvector_produces_a_field_basis} числа $1,\alpha,\beta$ образуют базис поля $K=\Q(\alpha,\beta)$ над $\Q$, а по лемме \ref{l:field_basis_produces_an_eigenvector} каждое $l_i$ порождается вектором $\vec l_i=(1,\sigma_i(\alpha),\sigma_i(\beta))$, где $\sigma_1(=\id),\sigma_2,\sigma_3$ --- все вложения $K$ в $\R$. То есть, если через $\big(\vec l_1,\vec l_2,\vec l_3\big)$ обозначить матрицу со столбцами $\vec l_1,\vec l_2,\vec l_3$, получим
\[
  \big(\vec l_1,\vec l_2,\vec l_3\big)=
  \begin{pmatrix}
    1 & 1 & 1 \\
    \alpha & \sigma_2(\alpha) & \sigma_3(\alpha) \\
    \beta & \sigma_2(\beta) & \sigma_3(\beta)
  \end{pmatrix}.
\]
Определим следующие классы двумерных алгебраических цепных дробей:
\begin{align*}
  \mathbf{CF}_1 & = \Big\{ \cf(l_1,l_2,l_3)\in\gA_2 \,\Big|\, \beta = \sigma_{2}(\alpha),\ \trace(\alpha)=0 \Big\}, \\
  \mathbf{CF}_2 & = \Big\{ \cf(l_1,l_2,l_3)\in\gA_2 \,\Big|\, \beta = \sigma_{2}(\alpha),\ \trace(\alpha)=1 \Big\}, \\
  \mathbf{CF}_3 & = \Big\{ \cf(l_1,l_2,l_3)\in\gA_2 \,\Big|\, \beta = \sigma_{2}(\alpha)^{-1},\ \norm(\alpha)=1 \Big\}, \\
  \mathbf{CF}_4 & = \Big\{ \cf(l_1,l_2,l_3)\in\gA_2 \,\Big|\, \beta = -\sigma_{2}(\alpha)^{-1},\ \norm(\alpha)=-1 \Big\}.
\end{align*}
Обозначим также для каждого $i=1,2,3,4$ через $\overline{\mathbf{CF}}_i$ образ $\mathbf{CF}_i$ при действии группы $\Gl_3(\Z)$:
\[
  \overline{\mathbf{CF}}_i=
  \Big\{ \cf(l_1,l_2,l_3)\in\gA_2 \,\Big|\, \exists X\in\Gl_3(\Z):X\big(\cf(l_1,l_2,l_3)\big)\in\mathbf{CF}_i \Big\}.
\]

\begin{lemma}\label{l:CF_instead_of_statements}
  Для дроби $\cf(l_1,l_2,l_3)\in\gA_2$ выполняется условие \textup{(а)}, \textup{(б)}, \textup{(в)} или \textup{(г)} теоремы \ref{main_t_1} тогда и только тогда, когда $\cf(l_1,l_2,l_3)$ принадлежит классу $\overline{\mathbf{CF}}_1$, $\overline{\mathbf{CF}}_2$, $\overline{\mathbf{CF}}_3$ или $\overline{\mathbf{CF}}_4$ соответственно.
\end{lemma}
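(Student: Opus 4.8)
The plan is to prove the four stated equivalences (condition (а) with $\overline{\mathbf{CF}}_1$, (б) with $\overline{\mathbf{CF}}_2$, (в) with $\overline{\mathbf{CF}}_3$, (г) with $\overline{\mathbf{CF}}_4$) in one uniform sweep, by unwinding the definitions until both sides describe the same canonical shape of a vector spanning a central subspace. The basic tool is Corollary \ref{cor:eigenvector_determines_all}: an element of $\gA_2$ is recovered from any single one of its central subspaces. Thus if some central subspace is spanned by $(1,\omega,\omega')$, then the whole fraction equals $\cf(L_1,L_2,L_3)$ with $L_i$ spanned by $(1,\tau_i(\omega),\tau_i(\omega'))$, where $\tau_1=\id,\tau_2,\tau_3$ are the embeddings of $\Q(\omega,\omega')$. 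Hence membership of a fraction in $\mathbf{CF}_1$ means exactly that, for a suitable numbering, $l_1$ is spanned by a vector of the form $(1,\omega,\sigma_2(\omega))$ with $\trace(\omega)=0$; the corresponding shapes for $\mathbf{CF}_2,\mathbf{CF}_3,\mathbf{CF}_4$ are $(1,\omega,\sigma_2(\omega))$ with $\trace(\omega)=1$, then $(1,\omega,1/\sigma_2(\omega))$ with $\norm(\omega)=1$, and $(1,\omega,-1/\sigma_2(\omega))$ with $\norm(\omega)=-1$.

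Next I would record that applying $X\in\Gl_3(\Z)$ to $\cf(l_1,l_2,l_3)$ produces $\cf(Xl_1,Xl_2,Xl_3)\in\gA_2$, with $Xl_1$ spanned by $X\vec l_1$. Therefore $\cf\in\overline{\mathbf{CF}}_i$ unwinds to the statement that there is $X\in\Gl_3(\Z)$ sending one of $\vec l_1,\vec l_2,\vec l_3$, up to a scalar, to a vector of the canonical shape for $\mathbf{CF}_i$. By Definition \ref{def:equivalence} this is precisely an equivalence $\vec l_j\sim(1,\omega,\omega')$ (respectively $(1,\omega,\pm1/\omega')$) as in conditions (а)--(г), the only discrepancy being that the distinguished vector is $\vec l_j$ for some $j$ rather than $\vec l_1$.

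The forward implication is then immediate: given the equivalence $\vec l_1\sim(1,\omega,\omega')$ furnished by the condition, the subspace $Xl_1$ is spanned by $(1,\omega,\omega')$, and after relabelling the two non-identity embeddings (equivalently, renumbering $l_2,l_3$) so that the chosen conjugate becomes $\sigma_2(\omega)$, we obtain $X(\cf)\in\mathbf{CF}_i$. The one point that genuinely needs care --- and the only real obstacle --- is the reverse implication when the operator $X$ supplied by $\cf\in\overline{\mathbf{CF}}_i$ carries $l_1$ not to the distinguished subspace $L_1$ of the image but to some other $L_m$. Then $Xl_1=L_m$ is spanned by $(1,\tau_m(\omega),\tau_m(\sigma_2(\omega)))$, so $X\vec l_1$ is proportional to $(1,\omega_0,\omega_0')$ with $\omega_0=\tau_m(\omega)$ a conjugate of $\omega$ and $\omega_0'=\tau_m(\sigma_2(\omega))$ a conjugate of $\omega_0$. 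Since $\trace$ and $\norm$ are invariant under the Galois action, $\omega_0$ carries the same trace (respectively norm) as $\omega$, whence $\vec l_1\sim(1,\omega_0,\omega_0')$ satisfies the very same condition (а) (respectively (б), (в), (г)). This Galois-invariance of the relevant invariant is exactly what renders the choice of base subspace immaterial and closes the argument in all four cases; the residual verifications (non-vanishing of the first coordinates involved and the elementary identities $\trace(\tau_m\omega)=\trace(\omega)$, $\norm(\tau_m\omega)=\norm(\omega)$) are routine.
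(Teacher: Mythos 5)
Your proposal is correct and follows essentially the same route as the paper's own (very terse) proof: conjugation-invariance of hyperbolicity, Corollary \ref{cor:eigenvector_determines_all} to reconstruct the whole eigenbasis from one subspace, and Definition \ref{def:equivalence} to match the two sides. The extra care you take with the labelling issue (when $X$ sends $l_1$ to a non-distinguished subspace, resolved via Galois-invariance of $\trace$ and $\norm$) is a point the paper leaves implicit, and your treatment of it is sound.
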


\begin{proof}
  Для любого $X\in\Gl_3(\Z)$ гиперболичность оператора $A\in\Gl_3(\Z)$ равносильна гиперболичности оператора $XAX^{-1}$. При этом собственные подпространства гиперболического оператора однозначно восстанавливаются по любому его собственному вектору (см. следствие \ref{cor:eigenvector_determines_all}). Остаётся воспользоваться определением эквивалентности (см. определение \ref{def:equivalence}).
\end{proof}

Покажем, что все дроби из классов $\mathbf{CF}_1$, $\mathbf{CF}_2$, $\mathbf{CF}_3$, $\mathbf{CF}_4$ палиндромичны. Положим
% Покажем, что все дроби из классов $\mathbf{CF}_i$, $i=1,2,3,4$, палиндромичны. Положим
\[
  F_{1} =
  \begin{pmatrix}
    1 & \phantom{-}0 & \phantom{-}0 \\
    0 & \phantom{-}0 & \phantom{-}1 \\
    0 & -1 & -1
  \end{pmatrix},
  \quad
  F_{2} =
  \begin{pmatrix}
    1 & \phantom{-}0 & \phantom{-}0 \\
    0 & \phantom{-}0 & \phantom{-}1 \\
    1 & -1 & -1
  \end{pmatrix},
  \phantom{-}
\]
\[
  F_{3} =
  \begin{pmatrix}
    0 & \phantom{-}0 & \phantom{-}1 \\
    1 & \phantom{-}0 & \phantom{-}0 \\
    0 & \phantom{-}1 & \phantom{-}0
  \end{pmatrix},
  \quad
  F_{4} =
  \begin{pmatrix}
    \phantom{-}0 & \phantom{-}0 & \phantom{-}1 \\
    -1 & \phantom{-}0 & \phantom{-}0 \\
    \phantom{-}0 & -1 & \phantom{-}0
  \end{pmatrix}.
\]

\begin{lemma}\label{oper_eq}
  Пусть $\cf(l_1,l_2,l_3)\in\gA_2$ и $i\in\{1,2,3,4\}$. Тогда $\cf(l_1,l_2,l_3)$ принадлежит классу $\mathbf{CF}_{i}$ в том и только в том случае, если $F_{i}$ --- её палиндромическая симметрия.
\end{lemma}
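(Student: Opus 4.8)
The plan is to fix $i$ and reduce the biconditional to the single vector identity $F_i\vec l_1=\mu_i\vec l_2$, from which the defining relations of $\mathbf{CF}_i$ can be read off by comparing coordinates. Throughout I write $\vec l_1=(1,\alpha,\beta)^\top$ and, for $j=1,2,3$, $\vec l_j=(1,\sigma_j(\alpha),\sigma_j(\beta))^\top=\sigma_j(\vec l_1)$, the embedding $\sigma_j$ being applied coordinatewise; note $\det F_i=1$, so $F_i\in\Gl_3(\Z)$. The point that makes everything work is that $F_i$ has rational entries and therefore commutes with every embedding applied coordinatewise: $F_i\,\sigma(\vec v)=\sigma(F_i\vec v)$.

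For the implication $\cf(l_1,l_2,l_3)\in\mathbf{CF}_i\Rightarrow F_i$ is palindromic, I would first observe that the defining relation of each class writes $\beta$ as $\sigma_2(\alpha)$, $\sigma_2(\alpha)^{-1}$ or $-\sigma_2(\alpha)^{-1}$, so in every case $\sigma_2(\alpha)\in K=\Q(\alpha,\beta)$. Then $K$ contains two distinct roots of the minimal polynomial of $\alpha$, hence all three, so $K/\Q$ is its cyclic Galois splitting field and $\{\sigma_1,\sigma_2,\sigma_3\}=\{\id,\sigma_2,\sigma_2^2\}$ with $\sigma_2$ an automorphism and $\sigma_2^2=\sigma_3$. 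A direct computation of $F_i\vec l_1$, using the accompanying trace or norm condition to rewrite the last coordinate, then gives $F_i\vec l_1=\mu_i\vec l_2$ with $\mu_i=1$ for $i=1,2$ and $\mu_i=\beta$ for $i=3,4$. Applying $\sigma_2$ and invoking rationality of $F_i$ yields $F_i\vec l_2=\sigma_2(\mu_i)\vec l_3$ and $F_i\vec l_3=\sigma_2^2(\mu_i)\vec l_1$, so $F_i$ cyclically permutes $l_1,l_2,l_3$. By the criterion stated right after \ref{eq:repres}, this makes $F_i\in\textup{Sym}_\Z(\cf(A))$ with $\sigma_{F_i}=(1\,2\,3)\neq\id$, i.e. a palindromic symmetry.

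For the converse, suppose $F_i$ is a palindromic symmetry. By Lemma \ref{ord_3} the permutation $\sigma_{F_i}$ is a $3$-cycle, hence fixed-point-free, so $F_i l_1\in\{l_2,l_3\}$; relabelling the two nonidentity embeddings if necessary, I may assume $F_i\vec l_1=\mu\vec l_2$. Comparing first coordinates fixes $\mu$ ($\mu=1$ for $i=1,2$, $\mu=\beta$ for $i=3,4$); the second coordinate then yields exactly the shape relation $\beta=\sigma_2(\alpha)$, $\sigma_2(\alpha)^{-1}$ or $-\sigma_2(\alpha)^{-1}$. This relation already forces $K$ to be Galois as above, so $\sigma_2^2=\sigma_3$, and substituting this into the third-coordinate equation collapses it to precisely $\trace(\alpha)=0$, $\trace(\alpha)=1$, $\norm(\alpha)=1$ or $\norm(\alpha)=-1$. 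Hence $\cf(l_1,l_2,l_3)\in\mathbf{CF}_i$.

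The four coordinate computations are routine $3\times3$ matrix--vector products, so I would write them out only for $i=1$ and remark that the others are identical up to the indicated signs and inversions. The one genuinely substantive step, which I would make sure to state cleanly, is the passage from the single identity $\beta=\sigma_2(\alpha)^{\pm1}$ to the Galois property of $K$ and the resulting relation $\sigma_2^2=\sigma_3$: this is exactly what converts the remaining coordinate equation into the stated trace or norm condition, and it is also what guarantees that the map of $l_1$ onto $l_2$ propagates to a full cyclic permutation rather than an isolated coincidence.
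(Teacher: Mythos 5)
Your proof is correct and follows essentially the same route as the paper: both directions reduce, via the characterization of palindromic symmetries from Lemma \ref{ord_3}, to comparing the coordinates of $F_i\vec l_1$ with $\mu\vec l_2$. The only difference is cosmetic — you make explicit the step ($\beta=\sigma_2(\alpha)^{\pm1}$ forces $K$ to be cyclic Galois with $\sigma_3=\sigma_2^2$, which is what turns the third coordinate into the trace/norm condition) that the paper uses silently in its matrix computations, and you derive the full cyclic permutation by applying $\sigma_2$ coordinatewise instead of writing out the whole $3\times3$ product.
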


\begin{proof}
  Из леммы \ref{ord_3} следует, что оператор $F\in\Gl_3(\Z)$ является палиндромической симметрией дроби $\cf(l_1,l_2,l_3)$ тогда и только тогда, когда с точностью до перестановки индексов существуют такие действительные  числа $\mu_1,\mu_2,\mu_3$, что $F\big(\vec l_1,\vec l_2,\vec l_3\big)=\big(\mu_2\vec l_2,\mu_3\vec l_3,\mu_1\vec l_1\big)$.

  Пусть $\cf(l_1, l_2, l_3) \in \mathbf{CF}_{1}$. Тогда
  \[
    F_{1} \big(\vec{l}_1, \vec{l}_2, \vec{l}_3\big)=
    \begin{pmatrix}
      1 & 1 & 1 \\
      \sigma_{2}(\alpha) &  \sigma_{3}(\alpha) & \alpha\\
      -\alpha - \sigma_{2}(\alpha) & -\sigma_{2}(\alpha) - \sigma_{3}(\alpha) & -\sigma_{3}(\alpha)- \alpha\\
    \end{pmatrix}
    =\big(\vec{l}_2, \vec{l}_3, \vec{l}_1\big).
  \]
  Следовательно, $F_{1}$ ---  палиндромическая симметрия  $\cf(l_1,l_2,l_3)$. Обратно, предположим, $F_{1}$ ---  палиндромическая симметрия  $\cf(l_1,l_2,l_3)$. Тогда существует $\mu_2$, такое что с точностью до перестановки индексов
  \[
    F_{1}\vec{l}_1=
    \begin{pmatrix}
      1  \\
      \beta \\
      - \alpha - \beta
    \end{pmatrix}
    =\mu_2
    \begin{pmatrix}
      1  \\
      \sigma_{2}(\alpha) \\
      \sigma_{2}(\beta)
    \end{pmatrix},
  \]
  откуда $\mu_2 = 1$, $\beta =\sigma_{2}(\alpha)$ и $\trace(\alpha) = 0$. Стало быть, $\cf(l_1, l_2, l_3) \in \mathbf{CF}_{1}$.

  Пусть $\cf(l_1, l_2, l_3) \in \mathbf{CF}_{2}$. Тогда
  \[
    F_{2} \big(\vec{l}_1, \vec{l}_2, \vec{l}_3\big) =
    \begin{pmatrix}
      1 & 1 & 1 \\
      \sigma_{2}(\alpha) &  \sigma_{3}(\alpha) & \alpha\\
      1 - \alpha - \sigma_{2}(\alpha) & 1 - \sigma_{2}(\alpha) - \sigma_{3}(\alpha) & 1 - \sigma_{3}(\alpha) - \alpha\\
    \end{pmatrix}
    =\big(\vec{l}_2, \vec{l}_3, \vec{l}_1\big).
  \]
  Следовательно, $F_{2}$ ---  палиндромическая симметрия  $\cf(l_1, l_2, l_3)$. Обратно, предположим, $F_{2}$ ---  палиндромическая симметрия  $\cf(l_1, l_2, l_3)$. Тогда существует $\mu_2$, такое что с точностью до перестановки индексов
  \[
    F_{2}\vec{l}_1 =
    \begin{pmatrix}
      1  \\
      \beta \\
      1- \alpha - \beta
    \end{pmatrix}
    =\mu_2
    \begin{pmatrix}
      1  \\
      \sigma_{2}(\alpha) \\
      \sigma_{2}(\beta)
    \end{pmatrix},
  \]
  откуда $\mu_2 = 1$, $\beta =\sigma_{2}(\alpha)$ и $\trace(\alpha) = 1$. Стало быть, $\cf(l_1, l_2, l_3) \in \mathbf{CF}_{2}$.

  Пусть $\cf(l_1, l_2, l_3) \in \mathbf{CF}_{3}$. Тогда
  \[
    F_{3} \big(\vec{l}_1, \vec{l}_2, \vec{l}_3\big) =
    \begin{pmatrix}
      \sigma_{2}(\alpha)^{-1} & \sigma_{3}(\alpha)^{-1} & \alpha^{-1} \\
      1 & 1 & 1\\
      \alpha & \sigma_{2}(\alpha) & \sigma_{3}(\alpha)\\
    \end{pmatrix}
    =\Big(\sigma_{2}(\alpha)^{-1}\vec{l}_2,\,\sigma_{3}(\alpha)^{-1}\vec{l}_3,\,\alpha^{-1}\vec{l}_1\Big).
  \]
  Следовательно, $F_{3}$ --- палиндромическая симметрия  $\cf(l_1, l_2, l_3)$. Обратно, предположим, $F_{3}$ ---  палиндромическая симметрия  $\cf(l_1, l_2, l_3)$. Тогда существует $\mu_2$, такое что с точностью до перестановки индексов
  \[
    F_{3}\vec{l}_1 =
    \begin{pmatrix}
      \beta  \\
      1 \\
      \alpha
    \end{pmatrix}
    =\mu_2
    \begin{pmatrix}
      1  \\
      \sigma_{2}(\alpha) \\
      \sigma_{2}(\beta)
    \end{pmatrix},
  \]
  откуда $\mu_2 = \beta$, $\beta = \sigma_{2}(\alpha)^{-1}$ и $\norm(\alpha) = 1$. Стало быть, $\cf(l_1, l_2, l_3) \in \mathbf{CF}_{3}$.

  Пусть $\cf(l_1, l_2, l_3) \in \mathbf{CF}_{4}$. Тогда
  \[
    F_{4} \big(\vec{l}_1, \vec{l}_2, \vec{l}_3\big)=-
    \begin{pmatrix}
      \sigma_{2}(\alpha)^{-1} & \sigma_{3}(\alpha)^{-1} & \alpha^{-1} \\
      1 & 1 & 1\\
      \alpha & \sigma_{2}(\alpha) & \sigma_{3}(\alpha)\\
    \end{pmatrix}
    =-\Big(\sigma_{2}(\alpha)^{-1}\vec{l}_2,\,\sigma_{3}(\alpha)^{-1}\vec{l}_3,\,\alpha^{-1}\vec{l}_1\Big).
  \]
  Следовательно, $F_{4}$ --- палиндромическая симметрия  $\cf(l_1, l_2, l_3)$. Обратно, предположим, $F_{4}$ ---  палиндромическая симметрия  $\cf(l_1, l_2, l_3)$. Тогда существует $\mu_2$, такое что с точностью до перестановки индексов
  \[
    F_{4}\vec{l}_1 =
    \begin{pmatrix}
      \beta  \\
      -1 \\
      -\alpha
    \end{pmatrix}
    =\mu_2
    \begin{pmatrix}
      1  \\
      \sigma_{2}(\alpha) \\
      \sigma_{2}(\beta)
    \end{pmatrix},
  \]
  откуда $\mu_2 = \beta$, $\beta = -\sigma_{2}(\alpha)^{-1}$ и $\norm(\alpha) = -1$. Стало быть, $\cf(l_1, l_2, l_3) \in \mathbf{CF}_{4}$.
\end{proof}

\section{Доказательство критерия палиндромичности}\label{sec:proof}

В этом параграфе мы доказываем теорему \ref{main_t_1}. При помощи леммы \ref{l:CF_instead_of_statements} её можно переформулировать следующим образом: \emph{дробь $\cf(l_1,l_2,l_3)\in\gA_2$ палиндромична тогда и только тогда, когда она принадлежит одному из классов $\overline{\mathbf{CF}}_i$, $i=1,2,3,4$, причём $\overline{\mathbf{CF}}_2=\overline{\mathbf{CF}}_3=\overline{\mathbf{CF}}_4$.}

Если $\cf(l_1,l_2,l_3)$ принадлежит какому-то $\overline{\mathbf{CF}}_i$, то по лемме \ref{oper_eq} она палиндромична, ибо действие оператора из $\Gl_3(\Z)$ сохраняет палиндромичность.

Обратно, пусть дробь $\cf(l_1,l_2,l_3)\in\gA_2$ палиндромична и пусть $G$ --- её палиндромическая симметрия. Положим $F=G_+$ и рассмотрим точки $\vec z_1$, $\vec z_2$, $\vec z_3$, $\vec w$ из леммы \ref{main_lem}. Обозначим также через $\vec{e}_1$, $\vec{e}_2$, $\vec{e}_3$ стандартный базис $\R^3$. Для точек $\vec z_1$, $\vec z_2$, $\vec z_3$ выполняется либо утверждение \textup{(а)}, либо утверждение \textup{(б)} леммы \ref{main_lem}.

Пусть выполняется утверждение \textup{(а)} леммы \ref{main_lem}. Рассмотрим такой оператор $X_{1} \in \Gl_3(\Z)$, что
\[X_{1}\big(\vec{z}_1, \vec{z}_2, \vec{w}\big) = \big(\vec{e}_1 + \vec{e}_2, \vec{e}_1 - \vec{e}_3, \vec{e}_1\big).\]
Тогда $X_{1}(\vec{z}_3) = X_{1}(3\vec{w} - \vec{z}_1 - \vec{z}_2) = \vec{e}_1 - \vec{e}_2 + \vec{e}_3$ и $X_{1}FX_{1}^{-1} = F_{1}$, так как по лемме \ref{main_lem}
\[X_{1}FX_{1}^{-1}\big(\vec{e}_1 + \vec{e}_2, \vec{e}_1 - \vec{e}_3, \vec{e}_1 - \vec{e}_2 + \vec{e}_3\big) = \big(\vec{e}_1 - \vec{e}_3, \vec{e}_1 - \vec{e}_2 + \vec{e}_3, \vec{e}_1 + \vec{e}_2\big).\]
Стало быть, $X_{1}\big(\cf(l_1,l_2,l_3)\big) \in  \mathbf{CF}_{1}$, то есть $\cf(l_1,l_2,l_3)\in\overline{\mathbf{CF}}_1$.

Пусть выполняется утверждение \textup{(б)} леммы \ref{main_lem}. Рассмотрим такие операторы $X_2,X_3,X_4\in\Gl_3(\Z)$, что
\begin{align*}
  X_2\big(\vec{z}_1, \vec{z}_2, \vec{z}_3\big) & = \big(\vec{e}_1, \vec{e}_1 + \vec{e}_3, \vec{e}_1 + \vec{e}_2\big), \\
  X_3\big(\vec{z}_1, \vec{z}_2, \vec{z}_3\big) & = \big(\vec{e}_1, \vec{e}_2, \vec{e}_3\big), \\
  X_4\big(\vec{z}_1, \vec{z}_2, \vec{z}_3\big) & = \big(\vec{e}_1, -\vec{e}_2, \vec{e}_3\big).
\end{align*}
Тогда $X_iFX_i^{-1}=F_i$, $i=2,3,4$, так как по лемме \ref{main_lem}
\begin{align*}
  & X_{2}FX_{2}^{-1}\big(\vec{e}_1, \vec{e}_1 + \vec{e}_3, \vec{e}_1 + \vec{e}_2\big) = \big(\vec{e}_1 + \vec{e}_3, \vec{e}_1 + \vec{e}_2, \vec{e}_1\big), \\
  & X_{3}FX_{3}^{-1}\big(\vec{e}_1, \vec{e}_2, \vec{e}_3\big) = \big(\vec{e}_2, \vec{e}_3, \vec{e}_1\big), \\
  & X_{4}FX_{4}^{-1}\big(\vec{e}_1, -\vec{e}_2, \vec{e}_3\big) = \big(-\vec{e}_2, \vec{e}_3, \vec{e}_1\big).
\end{align*}
Стало быть, $X_i\big(\cf(l_1,l_2,l_3)\big) \in  \mathbf{CF}_i$, $i=2,3,4$, то есть $\cf(l_1,l_2,l_3)\in\overline{\mathbf{CF}}_i$, $i=2,3,4$. В частности, отсюда следует, что $\overline{\mathbf{CF}}_2=\overline{\mathbf{CF}}_3=\overline{\mathbf{CF}}_4$.

Теорема \ref{main_t_1} доказана.

\section{Сохранение конуса палиндромической симметрией в случае  $n = 2$ и $n = 3$}\label{sec_6}

Как нетрудно видеть, в предложении \ref{two_dimension} лишь два пункта эквивалентны, в то время как в теореме \ref{main_t_1} эквивалентны три пункта. То есть фактически при $n=2$ имеется три типа палиндромических симметрий, а при $n=3$ --- всего лишь два. Суть этого отличия в том, что пункт \textup{(г)} предложения \ref{two_dimension} реализует некоторую особенность одномерных цепных дробей, которой нет у двумерных.

Пусть $\cf(l_1,l_2)\in\gA_1$ и пусть подпространства $l_1$ и $l_2$ порождаются векторами $\vec l_1=(1,\alpha)$ и $\vec l_2=(1,\beta)$ соответственно. Пусть $G$ --- палиндромическая симметрия $\cf(l_1,l_2)$. Тогда $G(l_1)=l_2$, $G(l_2)=l_1$ (см. предложение \ref{prop:geometry_of_palindromicity}). Стало быть, существуют такие вещественные числа $\mu_1,\mu_2$, что матрица оператора $G$ в базисе $\vec l_1,\vec l_2$ имеет вид
\[
  \begin{pmatrix}
    0 & \mu_1 \\
    \mu_2 & 0
  \end{pmatrix}.
\]
Поскольку $\mu_1\mu_2=-\det G$, справедливо $G^2=-(\det G)I_2=\pm I_2$ (ср. с леммой \ref{ord_3}). Если $\det G=-1$, то есть если $\mu_1$ и $\mu_2$ одного знака, $G$ сохраняет два (противоположных) конуса из $\cC(l_1,l_2)$, отображая парус в каждом из них на себя, <<переворачивая>> его. При этом $-G$ сохраняет другие два противоположных конуса. Если же $\det G=1$, то $G$ переставляет конусы по кругу, не сохраняя ни один из них. Например, $G$ может просто осуществлять поворот плоскости на $90^\circ$ (см. рис. \ref{fig:intermediate_center}).

\begin{figure}[h]
  \centering
  \begin{tikzpicture}[scale=0.8]
    \draw[very thin,color=gray,scale=1] (-3.8,-1.8) grid (4.8,4.8);

    \draw[color=black] plot[domain=-5.1*25/144:2.3] (\x, {12*\x/5}) node[right]{$l_1$};
    \draw[color=black] plot[domain=-4.1:5.1] (\x, {-5*\x/12}) node[right]{$l_2$};

    \fill[blue!10!,path fading=north]
        (1+3.8,2+2.8) -- (1,0) -- (1,2) -- (1+2.8*2/5,2+2.8) -- cycle;
    \fill[blue!10!,path fading=east]
        (1+3.8,-3.8/2) -- (1,0) -- (1+3.8,2+2.8) -- cycle;
    \fill[blue!10!,path fading=north]
        (3.8/2,1+3.8) -- (0,1) -- (-2-1.8,1+3.8) -- cycle;
    \fill[blue!10!,path fading=west]
        (-2-1.8,1+3.8) -- (0,1) -- (-2,1) -- (-2-1.8,1+1.8*2/5) -- cycle;

    \draw[color=blue] (1+3.8,-3.8/2) -- (1,0) -- (1,2) -- (1+2.8*2/5,2+2.8);
    \draw[color=blue] (3.8/2,1+3.8) -- (0,1) -- (-2,1) -- (-2-1.8,1+1.8*2/5);

    \node[fill=blue,circle,inner sep=1.2pt] at (1,0) {};
    \node[fill=blue,circle,inner sep=1.2pt] at (1,2) {};
    \node[fill=blue,circle,inner sep=1.2pt] at (0,1) {};
    \node[fill=blue,circle,inner sep=1.2pt] at (-2,1) {};
    \node[fill=blue,circle,inner sep=0.8pt] at (1,1) {};
    \node[fill=blue,circle,inner sep=0.8pt] at (-1,1) {};
    \node[fill=blue,circle,inner sep=0.8pt] at (3,-1) {};
    \node[fill=blue,circle,inner sep=0.8pt] at (1,3) {};

    \draw (3.1,1.1) node[above right]{$\cK(C_1)$};
    \draw (-2+0.1,3.1) node[above right]{$\cK(C_2)$};

    \draw (6,1.5) node[right]
        {$\begin{pmatrix}
            0 & -1 \\
            1 & \phantom{-}0
          \end{pmatrix}
          \cK(C_1)=\cK(C_2)$};
  \end{tikzpicture}
  \caption{Палиндромическая симметрия, не сохраняющая никакой из конусов ($n=2$)} \label{fig:intermediate_center}
\end{figure}

Наличие именно такого рода симметрий описывается пунктом \textup{(г)} предложения \ref{two_dimension}. При этом симметрий другого типа, то есть с определителем $-1$, у дроби $\cf(l_1,l_2)$ может и не быть. Из результатов работы \cite{german_tlyustangelov_mjcnt} следует, что все палиндромические симметрии дроби $\cf(l_1,l_2)\in\gA_1$ имеют определитель $1$ (то есть никакая из них не сохраняет никакой конус из $\cC(l_1,l_2)$) тогда и только тогда, когда у периода цепной дроби числа $\alpha$ нельзя выбрать неполное частное, относительно которого период был бы симметричным (например, если период равен $(1,2,2,1)$).
%$\alpha=\frac{\sqrt{221}+5}{14}=[\overline{1;2,2,1}]$

В случае же $n=3$ для всякой палиндромичной двумерной цепной дроби $\cf(l_1,l_2,l_3)$ найдётся конус из $\cC(l_1,l_2,l_3)$, инвариантный относительно действия некоторой палиндромической симметрии. Более того, если $G$ --- произвольная палиндромическая симметрия дроби $\cf(l_1,l_2,l_3)\in\gA_2$, то ввиду следствия \ref{dif} (см. параграф \ref{sec_5}) существует конус $C \in \mathcal{C}(l_1, l_2, l_3)$, такой что $G_+(C)=C$ (см. рис. \ref{fig:3d}).

\begin{figure}[h]
  \centering
  \begin{tikzpicture}[x=10mm, y=7mm, z=-5mm]
    \begin{scope}[rotate around x=25]

    \draw [-stealth, thin, rotate=70] (2.8,-2.8, 0.3) arc [start angle=130, end angle=-165, x radius=0.2cm, y radius=0.4cm];
    \node[fill=white,circle,inner sep=1pt] at (2.346,2.346,-2.346) {};

    \fill[blue!05,opacity=0.6] (0,0,0) -- (3,0,0) -- (0,0,-3) -- cycle;
    \fill[blue!15,opacity=0.6] (0,0,0) -- (0,3,0) -- (0,0,-3) -- cycle;

    \fill[blue!15,opacity=0.6] (0,0,0) -- (0,-3,0) -- (-3,0,0) -- cycle;

    \draw [blue,thin] (-1.43,-1.43,1.43) --  (-1,-1,1);
    \draw [blue,thick] (-1.43,-1.43,1.43) -- (-2,-2,2);

    \fill[blue!25,opacity=0.7] (-7/12,-2/12,27/12) -- (-27/12,-7/12,2/12) -- (-2/12,-27/12,7/12) -- cycle;
    \draw [blue] (-7/12,-2/12,27/12) -- (-27/12,-7/12,2/12) -- (-2/12,-27/12,7/12) -- cycle;

    \node[fill=blue,circle,inner sep=1pt] at (-1,-1,1) {};

    \node[fill=blue,circle,inner sep=1pt] at (-7/12,-2/12,27/12) {};
    \node[fill=blue,circle,inner sep=1pt] at (-27/12,-7/12,2/12) {};
    \node[fill=blue,circle,inner sep=1pt] at (-2/12,-27/12,7/12) {};

    \draw [very thin] (-5,0,0) -- (5,0,0);
    \draw [very thin] (0,-5,0) -- (0,5,0);
    \draw [very thin] (0,0,0) -- (0,0,5);
    \draw [very thin] (0,0,-3) -- (0,0,-5);

    \draw [thin] (-3,0,0) -- (3,0,0);
    \draw [thin] (0,-3,0) -- (0,3,0);
    \draw [thin] (0,0,0) -- (0,0,3);
    \draw [very thin] (0,0,0) -- (0,0,-3);

    \draw [thin] (3,0,0) -- (0,0,-3);
    \draw [thin] (0,0,3) -- (0,-3,0);
    \draw [thin] (0,3,0) -- (3,0,0);
    \draw [thin] (0,3,0) -- (0,0,-3);
    \draw [thin] (0,0,3) -- (-3,0,0);
    \draw [very thin] (0,-3,0) -- (-3,0,0);

    \draw [blue,thin] (-1,-1,1) --  (1,1,-1);

    \fill[blue!30,opacity=0.7] (7/12,2/12,-27/12) -- (27/12,7/12,-2/12) -- (2/12,27/12,-7/12) -- cycle;
    \draw [blue] (7/12,2/12,-27/12) -- (27/12,7/12,-2/12) -- (2/12,27/12,-7/12) -- cycle;

    \node[fill=blue,circle,inner sep=1pt] at (1,1,-1) {};

    \node[fill=blue,circle,inner sep=1pt] at (7/12,2/12,-27/12) {};
    \node[fill=blue,circle,inner sep=1pt] at (27/12,7/12,-2/12) {};
    \node[fill=blue,circle,inner sep=1pt] at (2/12,27/12,-7/12) {};

    \draw (27/12-1/24,7/12,-2/12) node[right] {$\vec z_1$};
    \draw (2/12,27/12,-7/12) node[above left] {$\vec z_2$};
    \draw (7/12-2/24,2/12,-27/12) node[above right] {$\vec z_3$};
    \draw (1-1/24,1,-1-1/24) node[below right] {$\vec w$};

    \draw [blue,thick] (1,1,-1) -- (2.9,2.9,-2.9);

    \draw (2.9,2.9,-2.9) node[right] {$l$};     	
    \draw (5, 0, 0) node[right] {$l_1$};
    \draw (0, 5, 0) node[left] {$l_2$};
    \draw (0, 0,-5) node[right] {$l_3$};
    \draw (2.65,2,-2) node[right] {$G_+$};

    \fill[blue!15,opacity=0.7] (0,0,0) -- (0,3,0) -- (3,0,0) -- cycle;

    \fill[blue!07,opacity=0.7] (0,0,0) -- (-3,0,0) -- (0,0,3) -- cycle;
    \fill[blue!15,opacity=0.7] (0,0,0) -- (0,-3,0) -- (0,0,3) -- cycle;

    \end{scope}
  \end{tikzpicture}
  \caption{Палиндромическая симметрия $G_+$ и инвариантные конусы ($n=3$)} \label{fig:3d}{Точки $\vec z_1$, $\vec z_2$, $\vec z_3$, $\vec w$ --- из леммы \ref{main_lem}}
\end{figure}

При этом $G_-(C)=-C$, а оставшиеся шесть конусов из $\cC(l_1,l_2,l_3)$ переставляются оператором $G_-$ по циклу длины $6$. В частности, отсюда следует, что паруса в этих шести конусах имеют одинаковую ком\-би\-на\-тор\-но-це\-ло\-чис\-лен\-ную структуру. Если в то же время у дроби $\cf(l_1,l_2,l_3)$ найдётся палиндромическая симметрия, сохраняющая какой-нибудь конус из $\cC(l_1,l_2,l_3)$, отличный от $\pm C$, то паруса во всех восьми конусах будут иметь одинаковую комбинаторно-целочисленную структуру. Например, такое наблюдается в случае трёхмерного оператора Фибоначчи, подробно разобранном в работе \cite{korkina_2dim}.

%\clearpage

\end{document}